\newtheorem{thm}{Theorem}[section] 
\newtheorem{lem}[thm]{Lemma} 
\newtheorem{cor}[thm]{Corollary} 
\newtheorem{prop}[thm]{Proposition} 
\newtheorem{defn}[thm]{Definition} 
\theoremstyle{definition} 
\newtheorem{rem}[thm]{Remark} 
\theoremstyle{remark}
\numberwithin{equation}{section}
\def\S{\Sigma} 
\def\n{\nabla}
\def\p{\partial}
\def\a{\alpha}
\def\b{\beta}
\def\n{\nabla}
\def\p{\partial}
\def\a{\alpha}
\def\b{\beta}
\def\g{\gamma}
\def\l{\lambda}
\def\s{\sigma}
\def\ov{\overline}
\def\n{\nabla}
\def\<{\langle}
\def\>{\rangle}
\def\n{\nabla}
\def\NN{\mathbb{N}}
\def\RR{\mathbb{R}}
\def\SS{\mathbb{S}}
\def\p{\partial}
\def\a{\alpha}
\def\b{\beta}
\def\g{\gamma}
\def\l{\lambda}
\def\K{\mathcal{K}}
\def\s{\sigma}
\def\ov{\overline}
\def\wh{\widehat}
\def\S{\Sigma}
\def\V{\mathcal V}
\def\C{\mathcal{C}}
\def\ol{\overline}
\def\wh{\widehat}
\def\wt{\widetilde}
\def\bt{\mathbf{b}_\theta}
\patchcmd{\abstract}{\scshape\abstractname}{\textbf{\abstractname}}{}{}
\def\@makefnmark{} 
\numberwithin{equation}{section}
\numberwithin{exa}{section}
\begin{document}
\title[Alexandrov Fenchel inequalities II]
{Alexandrov-Fenchel inequalities \\for convex hypersurfaces in the half-space \\with capillary boundary II}
    
\author[X. Mei]{Xinqun Mei}
\address[X. Mei]{Key Laboratory of Pure and Applied Mathematics, School of Mathematical Sciences, Peking University,  Beijing, 100871, P.R. China}
\email{\href{qunmath@pku.edu.cn}{qunmath@pku.edu.cn}}

\author[G. Wang]{Guofang Wang}
\address[G. Wang]{Mathematisches Institut, Albert-Ludwigs-Universit\"{a}t Freiburg, Freiburg im Breisgau, 79104, Germany}
\email{\href{mailto:guofang.wang@math.uni-freiburg.de} {guofang.wang@math.uni-freiburg.de}}

\author[L. Weng]{Liangjun Weng}
\address[L. Weng]{Centro di Ricerca Matematica Ennio De Giorgi, Scuola Normale Superiore, Pisa, 56126, Italy \& Dipartimento di Matematica, Università di Pisa, Pisa, 56127, Italy.}
\email{\href{mailto:liangjun.weng@sns.it}  {liangjun.weng@sns.it}}

\author[C. Xia]{Chao Xia}
\address[C. Xia]{School of Mathematical Sciences, Xiamen University, Xiamen, 361005, P.R. China}
\email{\href{mailto:chaoxia@xmu.edu.cn} {chaoxia@xmu.edu.cn}}

	\subjclass[2020]{Primary: 52A39. Secondary: 52A40, 53C24, 58J50}
	
	\keywords{Mixed discriminant, Mixed volume, Capillary convex body,  Alexandrov-Fenchel inequality}

\begin{abstract} In this paper, we provide an affirmative answer to \cite[Conjecture 1.5]{WWX2023} on the Alexandrov-Fenchel inequality for quermassintegrals for convex capillary hypersurfaces in the Euclidean half-space. More generally, we establish a theory for capillary convex bodies in the half-space and prove a general Alexandrov-Fenchel inequality for mixed volumes of capillary convex bodies. The conjecture \cite[Conjecture 1.5]{WWX2023} follows as its consequence.
	\end{abstract}
%\tableofcontents

 \maketitle	
 \section{Introduction}
      Let $\RR^{n+1}_+=\{x\in \RR^{n+1}\,|\, x_{n+1}>0\}$ be the upper half-space. Let $\S\subset \overline{\RR^{n+1}_+}$ be a properly embedded, smooth compact hypersurface with boundary in $\overline{\RR^{n+1}_+}$ such that $\text{int}(\S)\subset \RR^{n+1}_{+}$ {and} $\p \Sigma\subset \p \RR^{n+1}_{+}.$
      We call $\S\subset \ol{\RR^{n+1}_+}$  a capillary hypersurface if $\S$ intersects $\p\RR^{n+1}_+$ at a constant contact angle $\theta\in (0, \pi)$.
      Let $\wh{\S}$ be the bounded closed region in $\ol{\RR^{n+1}_+}$ enclosed by $\S$ and $\p\RR^{n+1}_+$. The boundary of $\widehat{\S}$  consists of two parts: one is $\Sigma$ and  the other, which
will be denoted by $\widehat{\p \Sigma}$,  lies on  $\p \RR^{n+1}_+$. 
In \cite{WWX2023}, for a capillary hypersurface $\S\subset \ol{\RR^{n+1}_+}$, we introduced a new family of geometric functionals as follows:
\begin{eqnarray*} 
	\V_{0,\theta}(\widehat{\Sigma})\coloneqq |\widehat{ \S}|,
	\qquad \V_{1,\theta} (\widehat{\Sigma})\coloneqq \frac{1}{n+1} \left ( |\S|-\cos\theta |\widehat{\p\S}| \right),\end{eqnarray*}
and for $1\leq k\leq n$,
\begin{eqnarray}\label{quermassintegrals}
	\V_{k+1,\theta}(\widehat{\S})\coloneqq
	\frac{1}{n+1}\left(\int_\S H_kdA - \frac{\cos\theta \sin^k\theta  }{n}\int_{\p\S} H_{k-1}^{\p\S}ds \right),
\end{eqnarray}where  $H_k$ is the normalized $k$-th mean curvature of $\S\subset \ol{ \RR^{n+1}_+}$ and $H_{k-1}^{\p\S}$ is the normalized  $(k-1)$-th mean curvature of $\p\S\subset \widehat{\p \Sigma}\subset\p \RR^{n+1}_+$ (cf. \cite[Section 2.2]{WWX2023}). We use the convention that $H_0\equiv 1$ and $H_0^{\p\S}\equiv 1$. 
$\V_{0,\theta}$ is the volume of $\widehat \S$ and $\V_{1,\theta}$ is the capillary area of $\Sigma$, up to a multiple constant. $\V_{k,\theta}$ ($2\le k\le n+1)$  are natural capillary counterparts of quermassintegrals for closed hypersurfaces. (For quermassintegrals we refer to \cite{Sch} and for capillary hypersurfaces to \cite{Finn} and also \cite{Maggi}.) Especially,	as shown in \cite[Theorem 2.6]{WWX2023}, the first variation of $\V_{k,\theta}$ gives rise to $H_k$.  Moreover, the following conjecture was proposed in \cite[Conjecture 1.5]{WWX2023}.

\medskip

\noindent{\bf Conjecture.}  {\it 
	Let $\Sigma\subset    \ol{{\mathbb{R}}^{n+1}_+}$ $(n\geq 2)$ be a convex capillary hypersurface with a contact angle $\theta \in (0, {{\pi}})$. Then there holds
	\begin{eqnarray}\label{af ineq}
	\frac{ \V_{k,\theta}(\widehat{\S}) }{\bt}
	\geq 
	\left( \frac{  \V_{l,\theta}(\widehat{\S}) }{ \bt} \right)^{\frac{n+1-k}{n+1-l}} ,  \qquad  \forall ~ 0\le l<k\le n,
	\end{eqnarray}
	with equality holds if and only if $\Sigma$ is a capillary spherical cap. 
}

\medskip

Here $(n+1) \bt$ is the {capillary} area of $\C_{\theta,1}$, the capillary spherical cap with radius $1$. One can check that $\bt$ is the volume of $\widehat{\C_{\theta, 1}}$. 
The family of capillary spherical caps lying entirely in $\ol{{\mathbb{R}}^{n+1}_+}$ and intersecting $\partial{\RR}^{n+1}_{+}$ with a constant contact angle $\theta\in (0,\pi)$ is given by
\begin{eqnarray}\label{sph-cap}
\C_{\theta, r}\coloneqq\left\{\xi \in \ol{{\mathbb{R}}^{n+1}_+} \big|~ |\xi-r\cos\theta e|=r \right\}, \qquad r\in [0, \infty),
\end{eqnarray}
which is clearly a portion of a sphere of radius $r$ and centered at $r\cos\theta e$. Here we denote by $e\coloneqq-E_{n+1}=(0,\cdots, 0, -1)$ the unit outward normal of $\partial{\RR}^{n+1}_{+}\subset\ol{{\mathbb{R}}^{n+1}_+}$. For simplicity, we denote $\C_{\theta }\coloneqq\C_{\theta, 1}$. In the following, we denote $\s$ and $d\s$ the round metric and its associated volume form on $\C_{\theta}$ respectively.

For $k=1$ and $l=0$, \eqref{af ineq} is just the relative (or capillary) isoperimetric inequality which holds for any hypersurfaces in $\ol{{\mathbb{R}}^{n+1}_+}$ (see e.g. \cite[Theorem 19.21]{Maggi}). Therefore, \eqref{af ineq} are generalizations of the capillary isoperimetric inequality or capillary counterparts of the Alexandrov-Fenchel inequalities for closed convex hypersurfaces
(see e.g. \cite{BZ} or \cite{Sch}). 

In \cite{WWX2023}, we established \eqref{af ineq} for $k=n$ and any $0\leq l< n$, in the case $\theta\leq {\pi}\slash {2}$,  by using a locally constrained inverse curvature flow. Subsequently, Hu-Wei-Yang-Zhou \cite{HWYZ} used a similar locally constrained inverse type curvature flow to establish \eqref{af ineq} for all $0\leq l<k\leq n$, again in the case $\theta\leq {\pi}\slash {2}$. The new key ingredient in \cite{HWYZ} is to verify the convexity preservation along the flow via a boundary tensor maximum principle. One can also refer to \cite{MWW, MW24} for using various locally constrained curvature-type flows to prove partial results of \eqref{af ineq}  for the case $l=0$. % in \eqref{af ineq}. 
The contact angle range restriction  $\theta\leq  \pi \slash 2$ is very crucial in the aforementioned papers
in order to obtain  $C^2$ estimates. It actually becomes a challenging problem how to obtain $C^2$ estimates for those curvature flows in the case $\theta >{\pi}\slash  2$.
Recently, Wang-Weng-Xia proved the Minkowski-type inequality, i.e., $k=2$  and $l=1$  in \eqref{af ineq}, for all $\theta\in(0,\pi)$ in \cite{WWX2022}, by assuming only the mean convexity and the star-shapedness of the hypersurface. 
In \cite{WWX2022}, the authors studied an inverse mean curvature type flow and explored the specific
divergence structure of the mean curvature to directly obtain Schauder’s estimate for the
radial function for star-shaped hypersurfaces. This method is not easy to be generalized for  $k>2$.
Nevertheless, this result makes us believe that \eqref{af ineq} is true for the whole range of $\theta$.
The objective of this paper is to confirm this belief and fully resolve the {Conjecture} for all $\theta\in(0,\pi)$ using a different approach.

 In the classical theory of convex bodies, the isoperimetric-type inequalities for quermassintegrals state that for a convex body (a compact convex set with non-empty interior) $K\subset\mathbb{R}^{n+1}$ with $C^2$ boundary,  there holds
	\begin{eqnarray}\label{af ineq-x}
	\frac{ W_{k}(K)}{{\bf b}_{n+1}}
	\geq 
	\left( \frac{  W_{l}(K) }{{\bf b}_{n+1}} \right)^{\frac{n+1-k}{n+1-l}} ,  \qquad  \forall ~ 0\le l<k\le n,
	\end{eqnarray}
	where ${\bf b}_{n+1}=|\mathbb{B}^{n+1}|$, the volume of unit ball $\mathbb{B}^{n+1}$, and  $W_k(K)$ is the $k$-th quermassintegral defined by $$W_k(K)=\frac{1}{n+1}\int_{\p K}H_{k-1} dA.$$ 
 Moreover, equality in \eqref{af ineq-x} holds if and only if $K$ is a round ball. It is in fact a special case of a more general classical Alexandrov-Fenchel inequality for mixed volumes, which says that for a family of convex bodies $K_i\subset\mathbb{R}^{n+1}$, $i= 1,\cdots, n+1$, there holds
\begin{eqnarray}\label{classical-af}
V^2(K_1, K_2, K_3\cdots, K_{n+1})\ge V(K_1, K_1, K_3\cdots, K_{n+1}) V(K_2, K_2, K_3\cdots, K_{n+1}),
\end{eqnarray}
	where $V(K_1, K_2\cdots, K_{n+1})$ is the so-called mixed volume. 
	When $K_i, $ $i=1,\cdots, n+1$, has  $C^2$ boundary and $h_{i}:\SS^n\to \mathbb{R}$ is the support function of $K_i$, then it is known that
	$$V(K_1, K_2\cdots, K_{n+1})=V(h_1, h_2\cdots, h_{n+1})=\int_{\SS^n} h_1 Q(A[h_2], \cdots, A[h_{n+1}])d\s,$$
where $A[h]=\n^2 h+h \s$ and $Q$ is the so-called mixed discriminant. In this case, equality in \eqref{classical-af} holds if and only if $K_1$ and $K_2$ are homothetic. For the Alexandrov-Fenchel inequalities for closed convex hypersurfaces, see e.g. \cite[Section 20]{BZ} and \cite[Chapter 7]{Sch}. Recently, Guan-Ma-Trudinger-Zhu established in \cite{Guan} a form of Alexandrov-Fenchel inequality for a larger class of closed hypersurfaces, $k^*$-convex hypersurfaces.

In this paper, we shall make a systematic study of a special class of convex bodies in ${\mathbb{R}}^{n+1}$, that is, capillary convex bodies in $\ol{{\mathbb{R}}^{n+1}_+}$. By a capillary convex body $\widehat{\S}\subset \ol{{\mathbb{R}}^{n+1}_+}$, we mean that $\widehat{\S}$ is a convex body for a capillary hypersurface $\S$. Denote by $\mathcal{K}_\theta$ the family of capillary convex bodies in $\ol{{\mathbb{R}}^{n+1}_+}$. 
For $\widehat{\S}\in\mathcal{K}_\theta$, let $\nu$ be the outer unit normal of $\S$, i.e., the Gauss map of $\S$. We
define the {\it capillary Gauss map} by $\tilde{\nu}=\nu+\cos\theta e$. It turns out  $\tilde{\nu}: \S\to \C_\theta$ is a diffeomorphism and one can reparametrize $\S$ by its inverse on $\C_\theta$. Moreover, there is a one-to-one correspondence between a capillary convex body and  a   capillary support 
function $h\in C^2(\C_\theta)$, which is a capillary convex function, that is, $A[h]>0$ and 
\begin{eqnarray}\label{boundary-cond}
\n_\mu h=\cot\theta h ~~ \hbox{ on }\p \C_\theta,
\end{eqnarray}
where $\mu$ is the outward conormal of $\p \C_\theta\subset \C_\theta$.
See Proposition \ref{equ pro} below or \cite[Section 2]{MWW3}. Since $\widehat{\S}$ is not $C^2$ at the points in $\p \S$, the classical support function of $\widehat{\S}$, which is defined on whole $\SS^n$, is not $C^2$. Therefore, we use instead the capillary support function which is only defined on $\C_\theta$ and satisfies the boundary condition \eqref{boundary-cond}.
For example, the capillary support function of $\widehat{\C_\theta}$ is given by (see Section \ref{sec2.1})
\begin{eqnarray*}
\ell(\xi)=\sin^{2}\theta+\cos\theta \<\xi, e\>, \quad \xi\in \C_\theta.
\end{eqnarray*}
This is the main point of view different from the one in the classical case, where a convex body is determined by a (smooth or non-smooth) support function defined on the whole unit sphere.

For a family of capillary convex bodies $\{\widehat{\S}_i\}_{i=1}^{m}\subset \mathcal{K}_\theta$, whose capillary support functions are given by $h_{i}$, the mixed volume
$V(\widehat{\S}_{i_1}, \cdots, \widehat{\S}_{i_{n+1}})$ is defined in the classical way.
We will see that the Minkowski sum
$K=\sum\limits_{i=1}^{m} \l_i \widehat{\S}_i$ lies in $\mathcal{K}_\theta$ and its capillary support function is given by $h_K= \sum\limits_{i=1}^{m} \l_i h_{i}.$
Using $$(n+1)|K|=\int_{\p K\cap \RR^{n+1}_+} \<X,\nu\>dA=\int_{\C_\theta} h_K\det (A[h_K]) d\s,$$
one can check that the mixed volume is given by
	\begin{eqnarray*}\label{defn-V}
			V(\widehat{\S}_{i_1}, \cdots, \widehat{\S}_{i_{n+1}})=V(h_{i_1}, \cdots, h_{i_{n+1}})\coloneqq\frac{1}{n+1}\int_{\C_{\theta}}h_{i_1}Q\left(A[h_{i_2}], \cdots, A[h_{i_{n+1}}]\right)d\s.
		\end{eqnarray*}
		We first observe that the quermassintegrals introduced in \cite{WWX2023} can be reinterpreted as a special case of the above mixed volume. Precisely, 
			\begin {eqnarray*}%\label{V and V-k}
			 \mathcal{V}_{n-k+1,\theta}(\widehat{\Sigma})=V(\underbrace{\widehat{\S}, \cdots, \widehat{\S}}_{k ~\rm{ copies}} ,\widehat{\C_\theta},\cdots, \widehat{\C_\theta})= V(\underbrace{h, \cdots, h}_{k ~\rm{ copies}} ,\ell,\cdots, \ell), ~~~ 0\leq k\leq n+1,
		\end{eqnarray*}	where $h$ and $\ell$ are the capillary support functions of $\widehat{\Sigma}$ and $\widehat{\C_\theta}$ respectively. 
By virtue of this observation,  the conjectured inequality \eqref{af ineq} follows in fact directly from the classical Alexandrov-Fenchel inequality \eqref{classical-af}. It remains to consider the equality case. The characterization of the equality case of the Alexandrov-Fenchel inequality is actually a long-standing open problem, see e.g. \cite[Section 7.6]{Sch}. Very recently, Shenfeld-van Handel made a big breakthrough in \cite{SvH3} and classified the extremals of the Alexandrov-Fenchel inequality for convex polytopes. They actually solved the problem where the convex bodies $K_3, K_4, \cdots, K_{n+1}$ in \eqref{classical-af} are a combination of polytopes, zonoids, and smooth bodies, see \cite[Section 14]{SvH3}. The case we study here is not directly included. In order to study the mixed volume for the class $\mathcal{K}_\theta$ of capillary convex bodies, we will use capillary support functions defined only on $\C_\theta$. We prove the following capillary version of the Alexandrov-Fenchel inequality with rigidity characterization.
\begin{thm}\label{thm1}
	Let $\widehat{\S}_i\in\mathcal{K}_{\theta}$ for~$1\leq i\leq n+1$.
	 Then 
\begin{eqnarray}\label{general AF}
		V^{2}(\widehat{\S}_{1}, \widehat{\S}_{2}, \wh{\S}_3,\cdots, \widehat{\S}_{n+1})\geq V(\widehat{\S}_{1}, \widehat{\S}_{1},\wh{\S}_3,\cdots, \widehat{\S}_{n+1})V(\widehat{\S}_{2}, \widehat{\S}_{2},\wh{\S}_3,\cdots, \widehat{\S}_{n+1}).
	\end{eqnarray}
	Equality holds if and only if 
 the capillary support functions $h_j$  of $\widehat{\S}_j, j=1, 2$, satisfy $$h_{1}=a h_{2}+\sum_{i=1}^n a_i\<\cdot, E_i\>, ~~\text{ in } ~~ \C_\theta$$  for some constants $a, a_{i} \in \RR$, $i=1,\cdots, n$, and $\{E_i\}_{i=1}^n$  the horizontal coordinate unit vectors of $\ol{\RR^{n+1}_+}$.
\end{thm}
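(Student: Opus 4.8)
The plan is to deduce the inequality \eqref{general AF} directly from the classical Alexandrov–Fenchel inequality \eqref{classical-af} by exhibiting, for each capillary convex body $\widehat{\S}_i\in\K_\theta$, an auxiliary \emph{closed} convex body $K_i\subset\RR^{n+1}$ whose classical mixed volumes reproduce the capillary mixed volumes $V(\widehat{\S}_{i_1},\cdots,\widehat{\S}_{i_{n+1}})$ up to a fixed multiplicative constant. The natural candidate is to ``double'' (or otherwise canonically extend) the capillary body across $\p\RR^{n+1}_+$: given the capillary support function $h_i\in C^2(\C_\theta)$ satisfying the Robin condition \eqref{boundary-cond}, one extends it to a convex function on all of $\SS^n$ producing a genuine (possibly non-smooth) support function $\bar h_i$ of a closed convex body $K_i$, in such a way that the boundary condition \eqref{boundary-cond} guarantees $C^{1,1}$-regularity of the extension across $\p\C_\theta$ (so that no negative singular measure appears). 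Then I would verify the identity
\begin{eqnarray*}
V(K_{i_1},\cdots,K_{i_{n+1}})=c_\theta\, V(\widehat{\S}_{i_1},\cdots,\widehat{\S}_{i_{n+1}})
\end{eqnarray*}
for a constant $c_\theta>0$ independent of the bodies, by splitting the sphere integral $\int_{\SS^n}\bar h_{i_1}Q(A[\bar h_{i_2}],\cdots)d\s$ into the cap $\C_\theta$ and its complement and using the symmetry of the extension together with integration by parts (the boundary terms cancel precisely because of \eqref{boundary-cond}). With this dictionary, \eqref{general AF} is nothing but \eqref{classical-af} applied to $K_1,K_2,K_3,\dots,K_{n+1}$.

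For the rigidity statement I would invoke the recent classification results of Shenfeld–van Handel \cite{SvH3}, but the subtlety flagged in the introduction is that the extended bodies $K_i$ need not be smooth or polytopes or zonoids — they have a ``seam'' along the equatorial direction — so the known extremal characterizations do not apply off the shelf. The strategy is therefore to perturb: approximate $\widehat{\S}_3,\cdots,\widehat{\S}_{n+1}$ (equivalently $K_3,\cdots,K_{n+1}$) by \emph{smooth} capillary convex bodies with strictly positive $A[h]$, apply the equality/near-equality analysis for the smooth case, and pass to the limit. In the smooth regime, equality in the Alexandrov–Fenchel inequality for mixed discriminants is governed by an elliptic operator $L u=\frac{1}{n}\,\mathrm{tr}\big(\mathrm{cof}(A[h_2])\cdot A[u]\big)$ of second order on $\C_\theta$ with the Robin-type boundary condition inherited from \eqref{boundary-cond}; equality forces $h_1$ to lie in the kernel of the associated quadratic form, and one shows by a Hilbert-space/spectral argument (exactly as in the classical proof via the second eigenvalue of $L$) that this kernel consists precisely of $h_1=ah_2+(\text{linear functions on }\RR^{n+1}\text{ restricted to }\C_\theta)$. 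The linear functions $\langle\cdot,E_{n+1}\rangle$ and $h_2$ itself are killed because they correspond to translations/scalings that preserve the capillary structure; crucially, $\langle\cdot,E_{n+1}\rangle=-\langle\cdot,e\rangle$ is \emph{not} admissible as an independent direction because a vertical translation destroys the contact angle condition — which is exactly why only the $n$ horizontal directions $E_1,\cdots,E_n$ appear in the theorem.

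The main obstacle, and where the bulk of the work will go, is precisely this last point: identifying the kernel of the relevant bilinear form on $\C_\theta$ \emph{together with the boundary condition} \eqref{boundary-cond} and showing it is exactly $(n+1)$-dimensional, spanned by $h_2$ and the $n$ horizontal coordinate functions. One must (i) set up the right weighted $L^2$ space on $\C_\theta$ in which $L$ is self-adjoint with the Robin boundary condition, (ii) establish a Poincaré-type inequality / spectral gap isolating the ``trivial'' kernel, and (iii) handle the degeneracy at $\p\C_\theta$ coming from the factor $\sin\theta$ and the conormal condition. A secondary difficulty is the approximation step: one needs that capillary convex bodies can be $C^0$-approximated by smooth strictly convex ones \emph{within} $\K_\theta$ (respecting \eqref{boundary-cond}), and that mixed volumes are continuous under this approximation — this should follow from a mollification argument adapted to the cap geometry, but it must be done carefully so that the Robin condition is preserved in the limit and no mass escapes to $\p\C_\theta$. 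Once these are in place, combining the smooth rigidity with the stability of the extremal set under the approximation yields the stated equality characterization for general $\widehat{\S}_1,\widehat{\S}_2\in\K_\theta$.
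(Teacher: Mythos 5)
Your architecture splits into an inequality part (reduction to the classical Alexandrov--Fenchel inequality via a doubled body) and a rigidity part (a spectral problem on $\C_\theta$). The second part is essentially the route the paper takes; the first contains a genuine gap and is, in any case, a detour. Each $\wh\S_i\in\K_\theta$ is \emph{already} a convex body in $\RR^{n+1}$, and Proposition \ref{mixed-volume-eq} identifies the capillary mixed volume with the classical mixed volume of these bodies directly (via the Minkowski-sum volume polynomial, with no constant $c_\theta$ and no surface-area-measure splitting), so \eqref{general AF} follows from \eqref{classical-af} with no extension whatsoever. If you nevertheless insist on doubling across $\p\RR^{n+1}_+$, the construction fails for $\theta>\pi/2$: there $\<\nu,e\>=-\cos\theta>0$ along $\p\S$, the hypersurface leaves the floor tilting outward, and the union of $\wh\S$ with its reflection has a waist along $\p\S$ and is not convex (already visible for $\widehat{\C_{\theta,r}}$). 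Since $\theta>\pi/2$ is precisely the range the paper is written to cover, this step would have to be discarded, not repaired. The approximation of $\wh\S_3,\dots,\wh\S_{n+1}$ by smooth strictly convex bodies is likewise unnecessary: by Proposition \ref{equ pro} every body in $\K_\theta$ already has a $C^2$ capillary support function with $A[h]>0$ up to $\p\C_\theta$, so the operator you call $L$ is uniformly elliptic on the closed cap and there is no degeneracy at the boundary to handle.

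For the rigidity, your plan coincides with the paper's (which in fact derives the inequality \emph{and} the equality case simultaneously from the spectral argument, making the classical-AF reduction superfluous), and you correctly identify why only the horizontal directions $E_1,\dots,E_n$ survive. But the two decisive mechanisms are missing from your sketch. First, self-adjointness of $\mathcal{A}f=f_2\,Q(A[f],A[f_2],\dots,A[f_n])/Q(A[f_2],A[f_2],\dots,A[f_n])$ on $L^2(\C_\theta,\omega)$ is not generic: after integrating by parts twice the boundary contribution reduces to $Q^{nn}\bigl(f_1\n_\mu f_2-f_2\n_\mu f_1\bigr)$, which vanishes exactly because both functions satisfy the same Robin condition $\n_\mu f=\cot\theta\,f$ (Proposition \ref{lem symmetric}); this is where capillarity enters and it must be checked. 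Second, the ``spectral gap'' is not obtained from a Poincar\'e inequality but from Alexandrov's pointwise mixed discriminant inequality, which gives $\<\mathcal{A}g,\mathcal{A}g\>_{L^{2}(\omega)}\geq\<g,\mathcal{A}g\>_{L^{2}(\omega)}$ and hence $\lambda^2\geq\lambda$ for every eigenvalue; combined with the simplicity of the first eigenvalue $\lambda_1=1$ (eigenfunction $f_2>0$, after a horizontal translation ensuring positivity) this pins the positive eigenspace to dimension one, which is the hypothesis of \cite[Lemma 1.4]{Shenfeld}. Equality then forces equality in the mixed discriminant inequality, whence $A[f-af_1]=cA[f_2]$, then $c=0$ because $f-af_1\in\mathrm{Ker}(\mathcal{A})$, so $f-af_1$ is the restriction of a linear function, and the Robin condition eliminates the vertical component. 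Without these two ingredients your step (ii) is an assertion rather than a proof.
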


As a consequence of Theorem \ref{thm1}, we confirm the above \textbf{Conjecture}.
 \begin{thm}\label{thm2}
	For any convex capillary hypersurface $\Sigma\subset\ol{{\RR}^{n+1}_{+}}$  with a contact angle $\theta\in (0, \pi)$, \eqref{af ineq} is true, %the equality \eqref{af ineq} holds,
 with equality  if and only if $\Sigma$ is a spherical cap.\end{thm}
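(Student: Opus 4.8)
The plan is to derive Theorem \ref{thm2} as a direct consequence of Theorem \ref{thm1}, exactly along the lines indicated in the introduction. First I would invoke the observation already recorded in the excerpt, namely that the capillary quermassintegrals coincide with mixed volumes in which the remaining slots are filled by copies of $\widehat{\C_\theta}$:
\begin{eqnarray*}
\mathcal{V}_{n-k+1,\theta}(\widehat{\Sigma})=V(\underbrace{h,\cdots,h}_{k},\ell,\cdots,\ell),\qquad 0\le k\le n+1,
\end{eqnarray*}
where $h$ is the capillary support function of $\widehat{\Sigma}$ and $\ell$ is that of $\widehat{\C_\theta}$. Re-indexing, it suffices to prove that the sequence $j\mapsto V(\underbrace{h,\cdots,h}_{j},\ell,\cdots,\ell)$ is log-concave (after the normalization by $\bt$), since \eqref{af ineq} is precisely the statement that $(\mathcal{V}_{k,\theta}/\bt)$ decays at least geometrically as $k$ increases from $l$ to $n$.

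Next I would run the standard "interpolation" argument that upgrades the two-term Alexandrov–Fenchel inequality \eqref{general AF} to the full chain. Concretely, applying Theorem \ref{thm1} with $\widehat\S_1=\widehat\S_2=\widehat\Sigma$ in $i$ slots, $\widehat\S_3=\cdots$ chosen among $\widehat\Sigma$ and $\widehat{\C_\theta}$, one gets that the numbers $V_j:=V(\underbrace{h,\cdots,h}_{j},\underbrace{\ell,\cdots,\ell}_{n+1-j})$ satisfy $V_j^2\ge V_{j-1}V_{j+1}$ for each $j$; a routine telescoping then yields $V_k/V_l \ge (V_{n+1}/V_l)^{(k-l)/(n+1-l)}$ wait— more precisely one obtains $V_k^{\,n+1-l}\ge V_l^{\,n+1-k}\,V_{n+1}^{\,k-l}$ for $0\le l<k\le n+1$. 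Since $V_{n+1}$ corresponds to $\mathcal{V}_{0,\theta}(\widehat{\C_\theta})$-type normalization, one checks that $V_{n+1}=\bt$ by the very definition of $\bt$ as the volume of $\widehat{\C_\theta}$; substituting this back and translating $V_j$ into $\mathcal{V}_{n-j+1,\theta}$ reproduces exactly \eqref{af ineq} with the stated exponent $\frac{n+1-k}{n+1-l}$ after relabeling. I would also remark that the case $k=n+1$ is excluded from \eqref{af ineq} only because $\mathcal{V}_{0,\theta}$ is the enclosed volume, but the log-concavity chain is valid up to $j=n+1$ and is used internally.

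For the rigidity statement: equality in \eqref{af ineq} for some pair $l<k$ forces, by the telescoping, equality $V_j^2=V_{j-1}V_{j+1}$ in every intermediate step, in particular in the step comparing $V(h,\ell,\ell,\cdots)$, $V(h,h,\ell,\cdots)$ and $V(\ell,\ell,\ell,\cdots)$, i.e. Theorem \ref{thm1} applied with $\widehat\S_1=\widehat\Sigma$, $\widehat\S_2=\widehat{\C_\theta}$, and the remaining entries all equal to $\widehat{\C_\theta}$. The equality characterization in Theorem \ref{thm1} then gives
$$h=a\,\ell+\sum_{i=1}^n a_i\langle\cdot,E_i\rangle\quad\text{on }\C_\theta$$
for constants $a,a_i\in\RR$. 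Since $\ell(\xi)=\sin^2\theta+\cos\theta\langle\xi,e\rangle$ is the capillary support function of $\widehat{\C_\theta}$ and $\langle\cdot,E_i\rangle$ are support functions of horizontal translations, the right-hand side is exactly the capillary support function of a translated (and scaled) spherical cap $\C_{\theta,a}$ shifted horizontally; because the capillary support function determines the capillary convex body, $\widehat\Sigma$ must itself be such a cap, hence $\Sigma$ is a capillary spherical cap. One must also note $a>0$ for $\widehat\Sigma$ to be a genuine convex body, and that the normalization $\bt$ is consistent in both directions. The main obstacle in writing this cleanly is purely bookkeeping: keeping track of the index shift between $\mathcal{V}_{k,\theta}$ and $V_j$, and verifying that $V_{n+1}$ indeed equals $\bt$ rather than some multiple of it — once those normalizations are pinned down, both the inequality and the rigidity fall out of Theorem \ref{thm1} with no further geometric input.
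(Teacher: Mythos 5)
Your route is the same as the paper's: identify $\V_{n-j+1,\theta}(\widehat\S)$ with the mixed volume $V_j=V(h,\dots,h,\ell,\dots,\ell)$ via Proposition \ref{prop V and V-k}, upgrade the two-term inequality of Theorem \ref{thm1} to the full log-concavity chain by the standard telescoping (the paper packages this as Corollary \ref{coro-AF}, citing Schneider), and read off both \eqref{af ineq} and the rigidity from the equality case $h=a\ell+\sum_i a_i\langle\cdot,E_i\rangle$. The one concrete error is the normalization you flagged yourself: with your convention ($V_j$ has $j$ copies of $h$ and $n+1-j$ copies of $\ell$) you have $V_{n+1}=V(h,\dots,h)=|\widehat\S|=\V_{0,\theta}(\widehat\S)$, \emph{not} $\bt$; it is $V_0=V(\ell,\dots,\ell)=|\widehat{\C_\theta}|$ that equals $\bt$. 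Consequently the telescoped inequality you should use is the one anchored at the index $0$, namely $V_p^{\,q}\ge V_0^{\,q-p}V_q^{\,p}$ for $0\le p<q\le n+1$; setting $p=n+1-k$, $q=n+1-l$ and dividing by $\bt^{\,q}=V_0^{\,q}$ gives exactly \eqref{af ineq}. Your displayed inequality $V_k^{\,n+1-l}\ge V_l^{\,n+1-k}V_{n+1}^{\,k-l}$ is a valid consequence of log-concavity but, with $V_{n+1}=|\widehat\S|$, it is not the one that yields the conjecture. The rigidity argument (equality propagates to the step $V_1^2\ge V_0V_2$, i.e.\ Theorem \ref{thm1} with $\widehat\S_1=\widehat\S$, $\widehat\S_2=\cdots=\widehat\S_{n+1}=\widehat{\C_\theta}$, whence $h=a\ell+\sum_i a_i\langle\cdot,E_i\rangle$ with $a>0$, the capillary support function of a horizontally translated cap $\C_{\theta,a}$) is correct and matches the paper.
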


The proof of Theorem \ref{thm1} is inspired by a previous paper of Shenfeld-van Handel \cite{Shenfeld}, where a new proof of the classical Alexandrov-Fenchel inequality is provided by using its closed relationship with a spectral problem on $\SS^{n}$ and its associated Bochner formula.  For our situation, we establish a similar relationship between the capillary Alexandrov-Fenchel inequality and a spectral problem on $\C_\theta$ with a boundary condition. Precisely, the Alexandrov-Fenchel inequality \eqref{function AF} for the mixed volume of capillary convex bodies can be translated into a sharp lower bound for a spectral problem on $\C_\theta$ associated with certain elliptic operator $\mathcal{A}$ with a Robin boundary condition, \eqref{eigenvalue problem}. For our operator, we proved that it is self-adjoint and the corresponding positive eigenspace has dimension one (cf. \eqref{eigenvalue problem} below), which in turn justifies the validity of the Alexandrov-Fenchel inequality \eqref{function AF}. 
The self-adjointness follows from the capillarity, namely, the intersection angle $\theta$ is constant. 

We also recommend  \cite{SV22} for a further application of this idea, which provides a new tool for the study of mixed volumes in the theory of convex bodies. 

We emphasize that the capillary Gauss map $\tilde \nu:\Sigma \to \C_\theta$ and the capillary support function  $h:\C_\theta \to \RR$ play a crucial role in this paper. They were already crucially used in our previous work on studying the capillary Minkowski problem \cite{MWW3}.

\ 

\textbf{The paper is organized as follows}: In Section \ref{sec2},  we provide basic properties about the capillary convex body and its associated capillary convex function,  the mixed volume of capillary convex bodies and its relation with the quermassintegrals. In Section \ref{sec3}, we prove  Theorem \ref{thm1} on the Alexandrov-Fenchel inequalities for mixed volumes and thus confirm the  {Conjecture} as a special case of Theorem \ref{thm1}. 

\

\section{Capillary convex bodies and mixed volume}\label{sec2}

	\subsection{Capillary convex bodies and capillary convex functions}\label{sec2.1}
\begin{defn}
	For $\theta\in (0,\pi)$, we say that $\S$ is a capillary hypersurface (with constant contact angle $\theta$) in $\overline{\RR^{n+1}_+}$ if
\begin{eqnarray}\label{capillary condition}
   \<\nu, e\>=-\cos \theta,\quad \text{along}~\partial \S.
\end{eqnarray} 
We call $\widehat{\S}$ a capillary convex body if $\widehat{\S}$ is a convex body (a compact convex set with non-empty interior) in $\RR^{n+1}$ and $\S$ is a capillary hypersurface in $\ol{\RR^{n+1}_+}$.
The class of capillary convex bodies in $\overline{\RR^{n+1}_+}$ is denoted by $\mathcal {K}_\theta$.\end{defn}

The simplest capillary convex body is given by $\widehat{\C}_{\theta}\in \K_\theta$. Recall that $\C_\theta$ was defined by \eqref{sph-cap} above.  Next for any $\widehat{\Sigma}\in \mathcal {K}_\theta$ we give a parametrization of $\S$   via the (capillary) Gauss map $\tilde \nu$.
\begin{lem}\label{gaussmap}The Gauss map $\nu: \S\to \SS^n$ has its image in 
	\begin{eqnarray*}
		\SS^n_\theta \coloneqq\{ y\in \SS^n \,|\,  y_{n+1} \geq \cos \theta\},
	\end{eqnarray*}
	and $\nu: \S\to \SS^n_\theta$ is a diffeomorphism.
	\end{lem}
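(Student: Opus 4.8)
The plan is to establish three things in order: first, that the Gauss image lies in the spherical cap $\SS^n_\theta$; second, that $\nu$ is an immersion (local diffeomorphism); and third, that it is globally injective and surjective, hence a diffeomorphism. For the containment $\nu(\S)\subset\SS^n_\theta$, I would argue as follows. Since $\widehat\S$ is a convex body contained in the closed half-space $\ol{\RR^{n+1}_+}$ whose boundary hyperplane $\p\RR^{n+1}_+$ has inward normal $-e=E_{n+1}$, the supporting-hyperplane description of convexity forces every outer normal $\nu(x)$ of $\S$ to satisfy $\<\nu(x),e\>\le \cos\theta$ is \emph{not} immediate; rather one uses that $\widehat\S$ lies on one side of $\p\RR^{n+1}_+$ together with the capillary condition $\<\nu,e\>=-\cos\theta$ along $\p\S$. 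Concretely: at an interior point $x\in\mathrm{int}(\S)$, the supporting hyperplane at $x$ cannot cross $\p\RR^{n+1}_+$ transversally "too steeply," and a continuity/convexity argument comparing with the boundary value $-\cos\theta$ shows $\<\nu(x),e\>\ge -\cos\theta$, i.e. $\nu(x)_{n+1}=-\<\nu(x),e\>\le\cos\theta$ fails—so in fact the correct normalization gives $\nu(x)_{n+1}\ge\cos\theta$. The cleanest route: the outward normal to $\widehat\S$ along the flat part $\widehat{\p\S}\subset\p\RR^{n+1}_+$ is exactly $e$, and by convexity the set of outer unit normals of $\widehat\S$ (its "Gauss image" in the generalized sense) is all of $\SS^n$; the normals realized on the curved part $\S$ are precisely those $y\in\SS^n$ with $\<y,e\>$ in a range determined by the contact angle, which by the capillary condition at $\p\S$ and convexity is exactly $\{\<y,e\>\le -\cos\theta\}\cup\ldots$—I would pin this down by a direct support-function computation, using that the supporting hyperplanes with normal $y$ satisfying $y_{n+1}<\cos\theta$ touch $\widehat\S$ only along $\widehat{\p\S}$.

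For the immersion property, recall that $\widehat\S$ is a \emph{smooth} convex body along $\mathrm{int}(\S)$ with $A[h]>0$ (equivalently the second fundamental form of $\S$ is positive definite, since $\S$ is a convex hypersurface with nowhere-vanishing Gaussian curvature by the strict convexity hypothesis built into "capillary convex body" via $A[h]>0$). Strict convexity of $\S$ means the shape operator $d\nu$ is positive definite at each interior point, hence $\nu$ is a local diffeomorphism on $\mathrm{int}(\S)$; along $\p\S$ one checks the same using that $\S$ meets $\p\RR^{n+1}_+$ transversally (the angle $\theta\in(0,\pi)$ is strictly between $0$ and $\pi$) so that $\p\S$ is a smooth strictly convex hypersurface of $\S$ and $\nu$ remains a submersion up to the boundary. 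Thus $\nu:\S\to\SS^n_\theta$ is a local diffeomorphism between manifolds-with-boundary of the same dimension, mapping $\p\S$ into $\p\SS^n_\theta=\{y_{n+1}=\cos\theta\}$ (this last fact is exactly the capillary condition $\<\nu,e\>=-\cos\theta$).

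For global bijectivity, I would use the standard convex-geometry fact that on a strictly convex closed hypersurface the Gauss map is a global diffeomorphism onto $\SS^n$; here $\S$ is not closed, but one can either (i) pass to the closed strictly convex hypersurface $\p\widehat\S=\S\cup\widehat{\p\S}$, apply the classical result, and restrict—the portion of $\p\widehat\S$ with outer normal in $\SS^n_\theta$ is exactly $\S$ by the containment step—or (ii) argue directly: a local diffeomorphism from a compact connected manifold-with-boundary that restricts to a diffeomorphism on the boundary is a global diffeomorphism (this follows since it is a proper local diffeomorphism, hence a covering map, onto the simply connected $\SS^n_\theta$, and the boundary behavior forces the covering to be trivial of degree one). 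I expect the \textbf{main obstacle} to be the boundary analysis: carefully verifying that $\nu$ does not degenerate at $\p\S$ and that it maps $\p\S$ diffeomorphically onto $\p\SS^n_\theta$, which requires combining the capillary condition with strict convexity of $\S$ up to the boundary; the interior statement is classical. Once injectivity on the boundary plus the covering-map argument are in place, surjectivity and the diffeomorphism conclusion are automatic, and then $\tilde\nu=\nu+\cos\theta\,e$ is the composition of $\nu$ with the rigid translation $\SS^n_\theta\to\C_\theta$, hence also a diffeomorphism, which is the reparametrization of $\S$ over $\C_\theta$ used throughout.
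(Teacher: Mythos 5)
Your overall architecture (containment, immersion, boundary injectivity, global argument) is reasonable and the immersion step is fine, but the proposal leaves the two genuinely nontrivial points unproven, and one of your two suggested routes to global bijectivity fails as stated. First, the injectivity of $\nu$ on $\p\S$ --- which you correctly single out as ``the main obstacle'' --- is exactly the step that needs an actual argument, and you do not supply one. The paper gets it from the fact (cited from \cite[Corollary 2.5]{WWX2023}) that $\p\S$ is a strictly convex \emph{closed} hypersurface of $\p\RR^{n+1}_+$; along $\p\S$ the capillary condition gives the decomposition $\nu=-\cos\theta\, e+\sin\theta\,\bar\nu$ with $\bar\nu$ the outward unit normal of $\p\S$ inside $\p\RR^{n+1}_+$, so injectivity of $\nu|_{\p\S}$ onto $\p\SS^n_\theta$ reduces to the classical Gauss map statement for the closed strictly convex hypersurface $\p\S\subset\RR^n$. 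Second, to upgrade boundary injectivity plus local strict convexity to a global diffeomorphism onto the image, the paper invokes a theorem of Ghomi (\cite[Corollary 3.1 and Notes 3.2]{Ghomi}); your covering-space argument (option (ii)) is a plausible substitute in spirit, but as written it presupposes the containment $\nu(\S)\subset\SS^n_\theta$ and that interior maps to interior, and your containment paragraph is both internally confused in its sign-chasing and explicitly deferred (``I would pin this down by a direct support-function computation''). Your option (i) is not salvageable: $\p\widehat\S=\S\cup\widehat{\p\S}$ is only Lipschitz across the edge $\p\S$ and contains the flat face $\widehat{\p\S}$, so the classical theorem that the Gauss map of a smooth closed strictly convex hypersurface is a diffeomorphism onto $\SS^n$ does not apply; one would have to pass to the multivalued normal-cone map, which you do not set up.

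A further remark on efficiency: the paper never proves the containment $\nu(\S)\subset\SS^n_\theta$ as a separate first step. Once one knows that $\nu$ is a diffeomorphism of $\S$ onto its image, that $\nu(\p\S)=\p\SS^n_\theta$ (immediate from the capillary condition together with the boundary injectivity above), and that $E_{n+1}\in\nu(\S)$ (it is the normal at the highest point of $\widehat\S$, which lies in the interior of $\S$), the image is a compact $n$-dimensional submanifold of $\SS^n$ whose boundary is exactly $\p\SS^n_\theta$ and which contains $E_{n+1}$, hence equals $\SS^n_\theta$. So the containment comes out for free at the end, and the support-function computation you were planning to carry out is unnecessary.
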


 \begin{proof} Since $\S$ is a smooth,  strictly convex capillary hypersurface, by \cite[Corollary~2.5]{WWX2023}, we see that $\partial\S\subset \partial{\RR^{n+1}_{+}}$ is a  strictly convex, closed  hypersurface. In particular, $\p \S$ is connected and topologically an $n-1$-sphere.  By \cite[Corollary 3.1 and Notes 3.2]{Ghomi}, we see that $\nu$ is one-to-one on $\p \S$ and furthermore, $\nu: \S\to \nu(\S)\subset \SS^{n}$ is a diffeomorphism. On the other hand, it follows from \eqref{capillary condition} that $\nu(\partial \S)=\p\SS_\theta^{n}$. Also, at least $E_{n+1}\in \SS^n_\theta$ lies in $\nu(\S)$. It follows that $\nu(\S)=\SS_\theta^{n}$ and the assertion follows.
\end{proof}
 
Let $T:\SS^n_\theta\to\C_\theta$ be the translation given by $$T(y)= y+\cos\theta e.$$ Instead of using the usual Gauss map $\nu$, it is more convenient to use the following map
	\begin{eqnarray}\label{cap Gauss map}
	\tilde \nu\coloneqq T \circ \nu: \S \to \C_\theta,\end{eqnarray}
which we call {\it capillary Gauss map} of $\Sigma$.
 
From Lemma \ref{gaussmap}, we can parametrize $\S$ by the inverse capillary Gauss map, i.e., $X:\C_{\theta}\to \S$,  given by
	\begin{eqnarray*}
		X(\xi)= \tilde \nu ^{-1} (\xi) =\nu^{-1}\circ T^{-1}(\xi)=\nu^{-1}(\xi-\cos\theta e).
	\end{eqnarray*}
		The support function of $\Sigma$ is given by 
	\begin{eqnarray*}
		h(X)=\<X,\nu(X)\>.
	\end{eqnarray*}
	By the above parametrization, $h$ is regarded as a function on $\C_\theta$, 
	\begin{eqnarray}\label{support}
		h(\xi) =\<X(\xi),\nu(X(\xi ))\>=\<X(\xi), T^{-1} (\xi )\>=
		\< \tilde \nu ^{-1}(\xi)  , \xi -\cos\theta e\>.
	\end{eqnarray}
{To distinguish with the ordinary support function of the convex body $\widehat{\S}$, we call $h$ in \eqref{support} {\it capillary support function} of $\S$ (or of $\widehat{\S}$). 
 
 \begin{rem} In \cite{MWW3}, we called $u(\xi)\coloneqq{h(\xi)} \slash (\sin^2 \theta + \cos \theta \<\xi, e\>)$ for $\xi\in \C_\theta$, as the capillary support function of $\S$. It is easy to see that $\n_\mu u=0$ on $\p\C_\theta$. $u$ has a clear geometric meaning (see \cite[Remark 2.3]{MWW3}) and is more suitable for this name. 
 \end{rem}

It is clear that  the capillary Gauss map for $ \C_\theta$  is the identity map 
from $\C_\theta \to \C_\theta$ and	\begin{eqnarray*}
		h(\xi)=\<\xi  , \xi -\cos\theta e\>=
		\sin ^2\theta + \cos\theta\< \xi, e\>.
	\end{eqnarray*}
	For simplicity, we denote 
 \begin{eqnarray}\label{ell}
     \ell(\xi)\coloneqq\sin^{2}\theta+\cos\theta \<\xi, e\>.
 \end{eqnarray} Therefore $\ell$  is  the capillary support function of $\C_{\theta}$ (or of $\wh{\C_\theta}$) in the latter context.

Denote $D$ the Euclidean covariant derivative, $\s$ and $\nabla$  the spherical metric and the covariant derivative on $\C_{\theta}$, then we have the following Lemma (see also \cite[Section 2]{MWW3}).
	\begin{lem}\label{para-lem} For the parametrization  $X:\C_{\theta}\to \S$, 
	\begin{itemize}
	\item[(1)] $X(\xi)=\nabla h(\xi) +h(\xi)T^{-1}(\xi)$.
		 \item[(2)]  $\n_\mu h=\cot \theta h$ along  $\p \C_{\theta}$, where $\mu$ is the outward unit conormal to $\p \C_{\theta}\subset \C_{\theta}$.
	 \item[(3)] The principal radii of $\S$ at $X(\xi)$ are given by the eigenvalues of $(\n^2 h+ h\s)$  with respect to the metric $\s$. In particular, $(\n^2 h+ h\s)>0$ on $\C_{\theta}$.
	 \end{itemize}	\end{lem}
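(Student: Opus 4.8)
The statement to prove is Lemma \ref{para-lem}, giving the parametrization of $\S$ via the capillary support function, the boundary condition $\n_\mu h = \cot\theta\, h$, and the identification of principal radii with eigenvalues of $A[h] = \n^2 h + h\s$.

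\medskip

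\noindent\textbf{Proof proposal.}

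The plan is to reduce everything to the classical theory of support functions on the sphere, transported through the translation $T$, and then to compute the boundary condition directly from the capillary angle constraint. For part (1), I would start from the classical formula for the inverse Gauss map of a convex body in terms of its support function: if $\bar h$ denotes the ordinary support function of $\widehat\S$ restricted to (a neighborhood of) $\SS^n_\theta$, then the point of $\S$ with outer normal $y\in\SS^n_\theta$ is $\bar X(y) = \bar\n \bar h(y) + \bar h(y)\, y$, where $\bar\n$ and $\bar\n^2$ are the covariant derivative and Hessian on $\SS^n$. Since $h(\xi) = \bar h(T^{-1}(\xi)) = \bar h(\xi - \cos\theta\, e)$ and $T$ is an isometry from $\SS^n_\theta$ onto $\C_\theta$ (being a Euclidean translation restricted to a spherical cap, carrying the round metric to the round metric $\s$), the spherical gradient and Hessian are intertwined: $\n h(\xi) = \bar\n\bar h(T^{-1}\xi)$ under the identification of tangent spaces via $dT = \mathrm{id}$, and $y = T^{-1}(\xi) = \xi - \cos\theta\, e$. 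Substituting gives $X(\xi) = \bar X(T^{-1}\xi) = \n h(\xi) + h(\xi)\, T^{-1}(\xi)$, which is exactly (1). The only care needed is to justify that the classical first-order formula holds at points of $\partial\S$ as well (where $\widehat\S$ is only $C^1$, not $C^2$), but since $h \in C^2(\C_\theta)$ by hypothesis and $X = \tilde\nu^{-1}$ is the smooth parametrization from Lemma \ref{gaussmap}, one can simply verify the identity by differentiating $h(\xi) = \langle X(\xi), \xi - \cos\theta\, e\rangle$ along $\C_\theta$ and using $\langle dX(\xi), \xi - \cos\theta\, e\rangle = 0$ (which holds because $dX$ is tangent to $\S$ and $\xi - \cos\theta\, e = \nu(X(\xi))$ is its normal); this self-contained computation produces (1) without invoking regularity of the ordinary support function.

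For part (2), the boundary condition, I would evaluate (1) at a point $\xi \in \p\C_\theta$. There $X(\xi) \in \p\S \subset \p\RR^{n+1}_+$, so $\langle X(\xi), e\rangle = 0$. Also for $\xi\in\p\C_\theta$ one has $\langle \xi, e\rangle = \cos\theta$ by the definition \eqref{sph-cap} of the spherical cap (the boundary circle of $\C_\theta$ lies in $\{x_{n+1}=0\}$, and a direct computation with $|\xi - \cos\theta\, e| = 1$ gives $\langle\xi,e\rangle = \cos\theta$ on $\p\C_\theta$), hence $\langle T^{-1}(\xi), e\rangle = \langle \xi - \cos\theta\, e, e\rangle = \cos\theta - \cos\theta = 0$ — wait, that needs $\langle e,e\rangle=1$, so it is $\cos\theta \cdot 1 - \cos\theta = 0$. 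Thus taking the inner product of the identity in (1) with $e$ gives $0 = \langle \n h(\xi), e\rangle + 0$. Now I need to identify $\langle \n h, e\rangle$ on $\p\C_\theta$ with a multiple of $\n_\mu h$. The key geometric fact is that the outward conormal $\mu$ of $\p\C_\theta$ in $\C_\theta$, when viewed as a vector in $\RR^{n+1}$, has the form $\mu = \alpha\, e + (\text{vector tangent to } \SS^n \text{ at } \xi \text{ that is also}\ldots)$; more precisely, since $e$ is orthogonal to $\p\C_\theta$ (the boundary circle is horizontal) and $e$ is not tangent to $\C_\theta$ at $\xi$ (since $\langle\xi - \cos\theta\,e, e\rangle = 0$ means... actually this says $e$ IS tangent to $\C_\theta$ at $\xi$ because the normal to $\C_\theta$ at $\xi$ is $\xi - \cos\theta\, e$). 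So $e$ is a unit tangent vector to $\C_\theta$ at $\xi\in\p\C_\theta$, orthogonal to $\p\C_\theta$, hence $e = \pm\mu$; checking orientation (outward means pointing away from the cap, i.e., downward) gives $e = \mu$ or $e = -\mu$ — I expect $\mu = e$ up to the correct sign, so $\langle\n h,e\rangle = \n_\mu h$ (or its negative). This would give $\n_\mu h = 0$, which is not (2); so I must have the geometry slightly off, and the resolution is that one should take the inner product not with $e$ but with the correct combination. The honest approach: write $\mu$ explicitly as the unit tangent to $\C_\theta$ at $\xi$ orthogonal to $\p\C_\theta$ pointing outward, express $e$ in the basis $\{\mu, \nu(X(\xi))\} = \{\mu, \xi-\cos\theta\, e\}$ of the plane orthogonal to $\p\C_\theta$, solve for the coefficients (using $\langle\xi,e\rangle = \cos\theta$ and $|\xi-\cos\theta e|=1$ to get $e = \sin\theta\,\mu + \cos\theta(\xi - \cos\theta\, e)$ or similar, then the coefficient of $X(\xi)$ along these directions... ), and substitute into $0 = \langle X(\xi), e\rangle = \langle \n h + h(\xi-\cos\theta e), e\rangle$. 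Decomposing $\n h = (\n_\mu h)\mu + (\text{tangential to }\p\C_\theta)$ and using that $X(\xi)$ being tangent... Collecting: $0 = (\n_\mu h)\langle\mu, e\rangle + h\langle \xi - \cos\theta\, e, e\rangle = (\n_\mu h)\sin\theta - h\cos\theta\cdot(\text{something})$; the arithmetic will yield $\n_\mu h = \cot\theta\, h$. This is the step requiring genuine care, and I flag it as the main obstacle — not because it is deep, but because getting the signs and the precise expression for $\mu$ and for $\langle \xi, e\rangle$ right on $\p\C_\theta$ requires a careful setup of the local frame.

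For part (3), I would differentiate the formula from (1). Write $X(\xi) = \n h(\xi) + h(\xi)\,N(\xi)$ where $N(\xi) := T^{-1}(\xi) = \xi - \cos\theta\, e = \nu(X(\xi))$. Differentiating in a direction $V$ tangent to $\C_\theta$ and using the Gauss--Weingarten relations on $\C_\theta\subset\RR^{n+1}$ (namely $D_V N = V$, since the cap $\C_\theta$ is a unit sphere translated, its shape operator with respect to the outer normal $N$ is the identity), one gets $D_V X = \n^2 h(V) + h\cdot V + (\text{terms along } N)$, and the tangential-to-$\S$ part is $\n^2 h(V) + h V$. On the other hand $\nu\circ X = N$, so the Weingarten map of $\S$ satisfies $d\nu = dN = \mathrm{id}$ on $T\C_\theta$, i.e., $dX^{-1} = \mathrm{id}$ after identifying via $N$; hence the matrix of $dX$ with respect to $\s$ is precisely $\n^2 h + h\s = A[h]$. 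Since the principal radii of $\S$ are the eigenvalues of $dX$ (the inverse of the Weingarten map, expressed in the normal-pullback frame), this is $A[h]$, and it is positive definite because $X$ is a parametrization of the strictly convex hypersurface $\S$ (strict convexity from the smoothness and convexity assumptions, cf. the hypotheses and Lemma \ref{gaussmap}). Throughout, I would lean on the standard fact that all of this is the classical support-function calculus on $\SS^n$ pulled back by the isometry $T$, so that (3) for the capillary case is literally the classical statement for $\SS^n_\theta$ rephrased on $\C_\theta$; the reference \cite[Section 2]{MWW3} can be cited for the detailed computation. The only genuinely new content relative to the classical theory is the boundary identity (2), which is where the constant-contact-angle hypothesis \eqref{capillary condition} enters.
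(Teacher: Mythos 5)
Your treatment of parts (1) and (3) is essentially the paper's argument: (1) is obtained by differentiating $h(\xi)=\<X(\xi),\xi-\cos\theta e\>$ and using that $D_VX$ is tangent to $\S$ while $\xi-\cos\theta e=\nu(X(\xi))$ is normal to it, and (3) follows from $D_VX=(\n^2h+h\s)(V)$ together with $\nu\circ X=T^{-1}$, so that $dX$ is the inverse of the shape operator and its eigenvalues are the principal radii. Those two parts are fine.

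Part (2), however, contains a genuine error that your proposal never repairs. You assert $\<\xi,e\>=\cos\theta$ on $\p\C_{\theta}$. This is false: $\p\C_{\theta}$ lies in the hyperplane $\{x_{n+1}=0\}$ and $e=-E_{n+1}$, so $\<\xi,e\>=0$ there (the value $\cos\theta$ is $\<y,E_{n+1}\>$ for $y\in\p\SS^n_{\theta}$ \emph{before} the translation; after translating, $\<\xi,e\>=\<y,e\>+\cos\theta=-\cos\theta+\cos\theta=0$). Consequently $\<T^{-1}(\xi),e\>=-\cos\theta\neq 0$, so $e$ is \emph{not} tangent to $\C_{\theta}$ at boundary points (unless $\theta=\pi/2$), contrary to what you deduce. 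This wrong value is precisely what produces your spurious conclusion $\n_\mu h=0$; you notice the contradiction but your proposed fix still carries $\<\xi,e\>=\cos\theta$ and a wrong sign in the decomposition of $e$, and you then defer the decisive step to ``the arithmetic will yield''. The correct computation is short: writing $\nu=\xi-\cos\theta e$, on $\p\C_{\theta}$ one has $\<\nu,e\>=-\cos\theta$, the outward conormal is $\mu=\frac{1}{\sin\theta}(e+\cos\theta\,\nu)$, equivalently $e=\sin\theta\,\mu-\cos\theta\,\nu$, and therefore $0=\<X,e\>=\<\n h+h\nu,\ \sin\theta\,\mu-\cos\theta\,\nu\>=\sin\theta\,\n_\mu h-\cos\theta\,h$, since $\n h$ is tangent to $\C_{\theta}$ and hence orthogonal to $\nu$. (The paper organizes the same computation in the opposite order: $\n_\mu h=\<\mu,X\>$ because $\<\mu,\nu\>=0$, and then $\<\mu,X\>=\frac{1}{\sin\theta}\<e+\cos\theta\,\nu,X\>=\cot\theta\,h$ using $\<X,e\>=0$.) Until this is carried out, your proof of (2) --- which you yourself identify as the only genuinely capillary ingredient of the lemma --- is incomplete.
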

\begin{proof}
Let $\{e_i\}_{i=1}^n$ be an orthonormal frame on $\C_{\theta}$. From \eqref{support}, we see
\begin{eqnarray*}
\n_{e_i} h(\xi)=\<D_{e_i} X, \xi -\cos\theta e\>+\<X, e_i\>=\<X, e_i\>, ~~~~1\leq i\leq n.
\end{eqnarray*}
Here the second equality follows from the fact that $D_{e_i} X$ is tangential to $\S$ while $\xi -\cos\theta e$ is normal to $\S$. The first assertion follows.

Using (1), we get for $\xi\in \p \C_{\theta}$, \begin{eqnarray*}
\n_\mu h(\xi)=\<\mu(\xi), X(\xi)\>,
\end{eqnarray*}
since $\<\mu(\xi), T^{-1}(\xi)\>=0$. On the other hand, by the capillary boundary condition, 
\begin{eqnarray*}
			\<\mu(\xi), X(\xi)\>&=& \<\frac{1}{\sin\theta}(e+\cos\theta\nu(X(\xi))),X(\xi)\>=\cot\theta\<\nu(X(\xi)), X(\xi)\>=\cot\theta h(\xi),		\end{eqnarray*}
since $\<X(\xi), e\>=0$ when $\xi\in \p \C_{\theta}$. The second assertion follows.

Next, using the Gauss formula $$D_{e_j}D_{e_i} X=\hbox{ tangential part} -B_{ij}\nu,$$ where $B_{ij}$ is the second fundamental form of $\S$, we have
\begin{eqnarray*}
\n_{e_j}\n_{e_i} h(\xi)&=&\<D_{e_j}D_{e_i} X, \xi -\cos\theta e\>+\<\n_{e_i}X, D_{e_j}\xi\> +\<\n_{e_j}X, D_{e_i}\xi\>+\<X,  \n_{e_j}\n_{e_i}\xi \>
\\&=&\<D_{e_j}D_{e_i} X, \xi -\cos\theta e\>-\<X,  \n_{e_j}\n_{e_i}\xi \>
\\&=&\<-B_{ij}(\xi -\cos\theta e), \xi -\cos\theta e\>- \<X, \s_{ij}(\xi -\cos\theta e)\>
\\&=&-B_{ij}-\s_{ij}h(\xi).
\end{eqnarray*}

Hence the second fundamental form $(B_{ij})$ is given by $(\n^2 h+ h\s)$.
By the Weingarten formula $D_{e_i}\nu=g^{kl}B_{ik}D_{e_l}X$, we see
\begin{eqnarray*}
\s_{ij}&=&\<D_{e_i}\xi, D_{e_j}\xi\>=\<D_{e_i}(\xi -\cos\theta e), D_{e_j}(\xi -\cos\theta e)\>
\\&=&\<g^{kl}B_{ik}D_{e_l}X, g^{pq}B_{jp}D_{e_q}X\>=B_{ik}B_{jp}g^{kp}.
\end{eqnarray*}
Hence \begin{eqnarray*}
g_{kp}=\s^{ij}B_{ik}B_{jp}.
\end{eqnarray*}
It follows that the principal radii, that are the eigenvalues of $(g_{kp}B^{jp})$, are given by the eigenvalues of $(\n^2 h+ h\s)$ with respect to the metric $\s$. The third assertion follows. 

\end{proof}

	\begin{defn}\label{defn-convex capillary fun}
		For $\theta\in (0, \pi)$, $f\in  C^{2}(\C_{\theta})$  is called a capillary function if it satisfies the following Robin-type boundary condition
		\begin{eqnarray}
			\label{robin}
			\nabla_\mu f=\cot \theta f\quad \text{on}\quad \partial \C_{\theta},
		\end{eqnarray}	
		If in addition, $(\n^{2}f+f\sigma)>0$ on $\C_{\theta}$, $f$ is called a capillary convex function on $\C_{\theta}$.
	\end{defn}
	%where $\nu$ is the Gauss map of $\S$. 
The next proposition says that a capillary convex function on $\C_{\theta}$  yields a capillary convex body in $\overline{\RR^{n+1}_+}$, and vice versa. This can be compared to the one-to-one correspondence of convex bodies in ${\RR^{n+1}}$ and convex functions on $\SS^n$.

\begin{prop}\label{equ pro}
		Let $h\in C^{2}(\C_{\theta})$. Then $h$ is a capillary convex function if and only if $h$ is the capillary support function of a capillary convex body $\wh{\S}\in \K_\theta$. 
	\end{prop}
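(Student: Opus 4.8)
The plan is to establish the equivalence by proving the two implications separately, with the forward direction (capillary convex function yields a capillary convex body) requiring the bulk of the work.

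First I would prove that if $h$ is the capillary support function of some $\wh{\S}\in\K_\theta$, then $h$ is a capillary convex function. This direction is essentially contained in Lemma \ref{para-lem}: part (2) gives the Robin boundary condition \eqref{robin}, and part (3) gives $(\n^2 h+h\s)>0$ on $\C_\theta$, since by definition a capillary convex body is built on a smooth \emph{strictly} convex capillary hypersurface, so its principal radii are positive. Thus this implication follows directly from the already-established lemma.

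For the converse, suppose $h\in C^2(\C_\theta)$ satisfies \eqref{robin} and $A[h]:=\n^2h+h\s>0$. I would first \emph{define} a candidate hypersurface by the formula suggested by Lemma \ref{para-lem}(1): set $X(\xi):=\n h(\xi)+h(\xi)T^{-1}(\xi)$ for $\xi\in\C_\theta$, where $T^{-1}(\xi)=\xi-\cos\theta\, e$, and let $\S:=X(\C_\theta)$. The key computations are: (i) $X$ is an immersion with $DX$ having the eigenvalue structure governed by $A[h]>0$, hence $\S$ is a smooth immersed strictly convex hypersurface whose Gauss map pulls back to the identity on $\C_\theta$ (reversing the computation in the proof of Lemma \ref{para-lem}); (ii) the boundary condition \eqref{robin} forces $\langle X(\xi),e\rangle=0$ for $\xi\in\p\C_\theta$, so $\p\S\subset\p\RR^{n+1}_+$, and moreover that $\S$ meets $\p\RR^{n+1}_+$ at the constant angle $\theta$, i.e.\ \eqref{capillary condition} holds; (iii) $X$ is injective and $\S$ bounds a convex body — here one uses that a closed immersed strictly convex hypersurface (after appropriate doubling/reflection, or directly via the standard argument that the Gauss map of a complete strictly locally convex immersion is a diffeomorphism onto its image) is embedded and bounds a convex region, so that $\wh{\S}\in\K_\theta$; and finally (iv) the capillary support function of this $\wh{\S}$ is exactly $h$, which follows from \eqref{support} by the identity $\langle X(\xi),\xi-\cos\theta\,e\rangle=\langle\n h+hT^{-1}(\xi),T^{-1}(\xi)\rangle=h(\xi)$ since $\n h\perp T^{-1}(\xi)$ and $|T^{-1}(\xi)|=|\xi-\cos\theta\,e|=1$ on $\C_\theta\subset\SS^n(\cos\theta\,e,1)$.

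The main obstacle is step (iii): verifying that the immersed hypersurface $\S$ defined by the formula is actually \emph{embedded} and \emph{bounds a convex body}, rather than merely being a locally convex immersion. In the closed case this is classical (a complete connected locally strictly convex immersed hypersurface in $\RR^{n+1}$ is embedded as the boundary of a convex body), and one would reduce to it by reflecting $\S$ across $\p\RR^{n+1}_+$ and smoothing along $\p\S$, or by invoking the results on convex hypersurfaces with boundary of \cite{Ghomi} already cited in the proof of Lemma \ref{gaussmap} (Corollary 3.1 and Notes 3.2). One must check that the reflected surface is $C^{1,1}$ (or better) and locally convex across the seam, which is exactly where the capillary condition \eqref{capillary condition} — equivalently the Robin condition \eqref{robin} — is used: it guarantees the two copies fit together with matching tangent planes and no concave corner. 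The remaining points (i), (ii), (iv) are routine differential-geometric computations that essentially reverse those already performed in the proof of Lemma \ref{para-lem}.
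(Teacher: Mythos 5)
Your overall architecture matches the paper's: the forward implication via Lemma \ref{para-lem}, and for the converse the definition $X(\xi)=\n h(\xi)+h(\xi)T^{-1}(\xi)$, the computation $D_VX=A[h](V)$ giving an immersion with unit normal $T^{-1}(\xi)$ and strict convexity, the Robin condition forcing $\<X(\xi),e\>=0$ on $\p\C_\theta$ together with the contact angle condition, and embeddedness via \cite[Corollary 3.1]{Ghomi}. Your step (iv) is a worthwhile explicit check that the paper leaves implicit. However, one of the two mechanisms you offer for step (iii) is wrong: reflecting $\S$ across $\p\RR^{n+1}_+$ does \emph{not} produce a $C^{1,1}$ (or even $C^1$) surface unless $\theta=\pi/2$. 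The capillary condition $\<\nu,e\>=-\cos\theta$ says $\S$ meets the hyperplane at angle $\theta$, so the doubled surface has a dihedral edge of interior angle $2\theta$ along $\p\S$; the tangent planes never match for $\theta\neq\pi/2$, and for $\theta>\pi/2$ the edge is reflex and the doubled region is not even convex (reflect $\wh{\C_{\theta,r}}$ with $\theta>\pi/2$: the union of the two ball segments, each larger than a half-ball, is a non-convex ``peanut''). Since the point of this paper is precisely to cover $\theta>\pi/2$, that branch must be discarded; only your alternative route via Ghomi's results is viable, and it is exactly what the paper uses: $\nu$ is injective on $\p\S$ and a diffeomorphism of $\S$ onto $\SS^n_\theta$, hence $X=\nu^{-1}\circ T^{-1}$ is an embedding.

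A second, smaller omission: you never verify that the interior of $\S$ lies in the open half-space $\RR^{n+1}_+$, which is needed for $\wh\S$ to be a capillary convex body in $\ol{\RR^{n+1}_+}$ rather than a convex hypersurface dipping below the hyperplane. The paper proves $\<X(\xi),E_{n+1}\>>0$ for $\xi\in\mathrm{int}(\C_\theta)$ by integrating $\<D_{\dot\g(t)}X,E_{n+1}\>=(\n^2h+h\s)(\dot\g,\dot\g)\,\<E_{n+1},\dot\g\>>0$ along the geodesic $\g$ from the nearest boundary point $\xi_0$ (where $\<X(\xi_0),E_{n+1}\>=0$) to $\xi$. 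This step should be added; it is short, but it is where the positivity of $A[h]$ is used a second time.
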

\begin{proof}
We have already seen from Lemma \ref{para-lem} that
if $h\in C^{2}(\C_{\theta})$ is the capillary support function of a convex capillary hypersurface $\Sigma\subset \overline{\RR^{n+1}_+}$, then  $h$ is a  capillary convex function. 
It remains to show the converse.  

Let $h\in C^{2}(\C_{\theta})$ be a  capillary convex function. Consider the hypersurface $\S$ given by the map
\begin{eqnarray*}
X: &&\C_{\theta}\to\RR^{n+1},\\
	&&X(\xi)=\n h(\xi)+h(\xi) T^{-1}(\xi).
		\end{eqnarray*}
		
		For  $\xi\in\partial\C_{\theta}$, it is easy to see $\<\xi, e\>=0$ and $\<\mu(\xi), e\>=\sin\theta$.
Then by  \eqref{robin}, we have 
		\begin{eqnarray*}
			\<X(\xi), e\>=\<\n h(\xi)+h(\xi)T^{-1}(\xi), e\>
			= \n_{\mu}h\cdot  \< \mu, e\>+h(\xi)\<\xi-\cos\theta e, e\>=0.
		\end{eqnarray*}
		which implies $X(\p \C_{\theta})=\partial\Sigma\subset \partial {\mathbb{R}}^{n+1}_{+}$.
		
For  any tangent vector field $V$ on $\C_{\theta}$, we have
\begin{eqnarray}\label{derivative}
			D_{V}X(\xi)&=&D_{V}(\n h+h T^{-1}(\xi))\nonumber
			\\&=& \n_{V}(\n h) -\<V, \n h\> T^{-1}(\xi) + D_{V} h T^{-1}(\xi)+ h(\xi)V\nonumber\\&=&(\n^2 h+h\s)|_{\xi}(V).		\end{eqnarray}
	Since  $(\n^2 h+h\s)>0$, we see $X$ is an immersion.
Also from \eqref{derivative}, we see that for an orthonormal frame $\{e_i\}_{i=1}^n$  at $\xi$,
$$T_{X(\xi)}\S={\rm span}\{D_{e_i}X(\xi)\}_{i=1}^n ={\rm span}\{e_i\}_{i=1}^n .$$ 
It follows that 
$\nu(X(\xi))=T^{-1}(\xi)=\xi-\cos\theta e$ is the unit normal to $\S$ at $X(\xi)$. 

On the other hand, for $\xi\in {\rm int}(\C_{\theta})$, we claim that $\<X(\xi), E_{n+1}\>>0$ for each $\xi\in {\rm int}(\C_{\theta})$. To see this,	 for each $\xi\in  {\rm int}(\C_{\theta})$, let $\xi_0\in \p \C_\theta$ be the nearest point  in $\p \C_\theta$ to $\xi$ and $\g: [0, 1]\to \C_\theta$ be the geodesic from $\xi_0$ to $\xi$.  Then
\begin{eqnarray*}
			\<X(\xi), E_{n+1}\>&=&\<X(\xi), E_{n+1}\>-\<X(\xi_0), E_{n+1}\>
			\\&=&\int_0^1\<D_{\dot{\g}(t)}X(\g(t)), E_{n+1}\>dt
	\\&=&\int_0^1 (\n^2 h+h\s)|_{\g(t)}(\dot{\g}(t), \dot{\g}(t) )\< E_{n+1}, \dot{\g}(t)\>dt.
		\end{eqnarray*}
		The last equality holds since for $V\in T_{\g(t)} \C_\theta$ such that $\<V, \dot{\g}(t)\>=0$, it holds $\< E_{n+1}, V\>=0$.
		Since $(\n^2 h+h\s)>0$ and $\< E_{n+1}, \dot{\g}(t)\>>0$, we obtain that $\<X(\xi), E_{n+1}\>>0$. 
		Hence $X(\C_{\theta})\subset \overline{\RR^{n+1}_+}$.
				
Next, we prove $X(\C_{\theta})=\S\subset \overline{\RR^{n+1}_+}$ is (locally) strictly convex and capillary.
First, in particular, 
$$\<\nu(X(\xi)), e\>=\<T^{-1}(\xi), e\>=-\cos\theta\quad \hbox{for}~\xi\in \p \C_{\theta},$$
which implies it is capillary.
Second by \eqref{derivative}, $$D_{e_i}\nu=D_{e_i}(\xi-\cos\theta e)=(\n^2 h+h\s)^{-1} \cdot D_{e_j}X,$$
Hence the eigenvalues of $(\n^2 h+h\s)^{-1}|_{\xi}$ give the principal curvatures of $\S$ at $X(\xi)$. Thus $X: \C_{\theta} \to \overline{\RR^{n+1}_+}$ is strictly convex.

Finally, we prove that $X: \C_{\theta}\to \overline{\RR^{n+1}_+}$ is an embedding. For this, we look at the Gauss map $\nu: \S\to \SS^n$. From the strictly convexity and capillarity of $\S$, we know from \cite[Corollary 3.1]{Ghomi} that
$\nu$ is one-to-one on $\p \S$ and furthermore, $\nu$ is a diffeomorphism on $\S$ (as in the proof of Lemma \ref{gaussmap}). On the other hand, we know that 
$\nu(\S)=\SS^n_{\theta}= \C_{\theta}-\cos\theta e$. Hence $X(\C_\theta)=\S=\nu^{-1}(\C_{\theta}-\cos\theta e)$ is an embedding.

This completes the proof of the lemma.

\end{proof}

Next, we derive a simple and crucial fact about the capillary functions.	
	\begin{lem}\label{pro-f}
		If $f$ is a capillary function on $\C_{\theta}$, then for all $1\leq \alpha\leq n-1$,  
		\begin{eqnarray}\label{conormal direction mu}
			\n^{2}f(e_{\alpha}, \mu )=0, \quad \text{on}\quad \partial\C_{\theta},
		\end{eqnarray}
	where $\{e_\a\}_{1\le \a\le n-1}$ is an orthonormal frame of $\p\C_{\theta }$.  
	\end{lem}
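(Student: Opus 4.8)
The plan is to differentiate the Robin boundary condition $\nabla_\mu f=\cot\theta\, f$ along the boundary and then carefully convert the resulting tangential derivative of $\nabla_\mu f$ into the Hessian component $\nabla^2 f(e_\alpha,\mu)$, paying attention to the connection terms that appear because $\mu$ is not parallel along $\partial\C_\theta$. First I would fix a point $p\in\partial\C_\theta$ and choose a convenient local frame: an orthonormal frame $\{e_\alpha\}_{\alpha=1}^{n-1}$ of $T_p(\partial\C_\theta)$ together with the outward conormal $\mu$, so $\{e_1,\dots,e_{n-1},\mu\}$ is an orthonormal frame of $T_p\C_\theta$. It is harmless to assume the $e_\alpha$ are extended so that $\nabla_{e_\alpha}e_\beta$ is normal to $\partial\C_\theta$ at $p$ (geodesic-type normal coordinates on the submanifold $\partial\C_\theta$).

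Next I would compute $e_\alpha(\nabla_\mu f)$ two ways. On one hand, applying $e_\alpha$ to the Robin condition $\nabla_\mu f=\cot\theta\, f$ (which holds identically on $\partial\C_\theta$) gives $e_\alpha(\nabla_\mu f)=\cot\theta\, \nabla_{e_\alpha}f$. On the other hand, writing $\nabla_\mu f=\langle \nabla f,\mu\rangle$ and differentiating, $e_\alpha(\nabla_\mu f)=\nabla^2 f(e_\alpha,\mu)+\langle\nabla f,\nabla_{e_\alpha}\mu\rangle$. The key geometric input is to control $\nabla_{e_\alpha}\mu$: since $\C_\theta$ is (a piece of) a round sphere and $\partial\C_\theta$ is a geodesic sphere inside it (the boundary circle sits at constant "latitude"), $\partial\C_\theta$ is a totally umbilic submanifold of $\C_\theta$, so the second fundamental form of $\partial\C_\theta\subset\C_\theta$ with respect to $\mu$ is a constant multiple of the induced metric; concretely $\nabla_{e_\alpha}\mu = \lambda\, e_\alpha$ (tangential part) plus a multiple of... no, since $|\mu|=1$, $\langle\nabla_{e_\alpha}\mu,\mu\rangle=0$, so $\nabla_{e_\alpha}\mu$ is purely tangential to $\partial\C_\theta$, equal to $\lambda e_\alpha$ for the appropriate constant $\lambda=\lambda_\alpha$ depending on the principal directions — but by umbilicity $\lambda$ is the same for all $\alpha$. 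Thus $\langle\nabla f,\nabla_{e_\alpha}\mu\rangle = \lambda\,\nabla_{e_\alpha}f$. Combining the two expressions yields $\nabla^2 f(e_\alpha,\mu)=(\cot\theta-\lambda)\nabla_{e_\alpha}f$.

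The remaining step, and the real point of the lemma, is to show the coefficient vanishes, i.e. $\lambda=\cot\theta$, so that $\nabla^2 f(e_\alpha,\mu)=0$ regardless of $\nabla_{e_\alpha}f$. This is a pure computation on the model $\C_\theta$: parametrize $\C_\theta=\{\xi\in\overline{\RR^{n+1}_+}:|\xi-\cos\theta\,e|=1\}$, note $\partial\C_\theta$ lies in $\{x_{n+1}=0\}$, and compute the shape operator of $\partial\C_\theta$ in $\C_\theta$ directly. Alternatively — and this is cleaner — I would apply the already-established identity from Lemma \ref{para-lem}(2) to the function $\ell(\xi)=\sin^2\theta+\cos\theta\langle\xi,e\rangle$, which is itself a capillary function (it is the capillary support function of $\widehat{\C_\theta}$), and for which $\nabla^2\ell+\ell\sigma=\sigma$, hence $\nabla^2\ell(e_\alpha,\mu)=\sigma(e_\alpha,\mu)=0$; feeding $\ell$ into the formula $\nabla^2 f(e_\alpha,\mu)=(\cot\theta-\lambda)\nabla_{e_\alpha}f$ forces $(\cot\theta-\lambda)\nabla_{e_\alpha}\ell=0$, and since $\nabla_{e_\alpha}\ell=\cos\theta\langle e_\alpha,e\rangle$ is not identically zero along $\partial\C_\theta$ (the outward conormal $\mu$ has $\langle\mu,e\rangle=\sin\theta\neq 0$, while the tangential directions do see $e$), we get $\lambda=\cot\theta$. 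I expect the main obstacle to be precisely this bookkeeping: correctly identifying that the only obstruction term is the umbilic second fundamental form of $\partial\C_\theta\subset\C_\theta$, and pinning down its value $\cot\theta$ — everything else is a one-line consequence of differentiating the Robin condition. If one prefers to avoid invoking umbilicity abstractly, the same conclusion drops out of the explicit round-metric computation in the coordinates already set up in Section \ref{sec2.1}.
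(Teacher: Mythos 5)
Your overall strategy is exactly the paper's: differentiate the Robin condition $\nabla_\mu f=\cot\theta\,f$ tangentially along $\partial\C_\theta$, write $\nabla^2 f(e_\alpha,\mu)=e_\alpha(\nabla_\mu f)-\langle\nabla f,\nabla_{e_\alpha}\mu\rangle$, and use that $\partial\C_\theta$ is totally umbilic in $\C_\theta$ with $\nabla_{e_\alpha}\mu=\lambda e_\alpha$, so that the two terms cancel once $\lambda=\cot\theta$. The paper simply asserts the umbilicity with principal curvature $\cot\theta$ and concludes in one line; your first suggested route (computing the shape operator of the geodesic sphere $\partial\C_\theta$ of geodesic radius $\theta$ inside the unit round sphere, which indeed gives $\cot\theta$) is the correct way to justify that assertion.

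However, your preferred ``cleaner'' way of pinning down $\lambda$ is broken. You propose to feed $\ell(\xi)=\sin^2\theta+\cos\theta\langle\xi,e\rangle$ into the identity $\nabla^2 f(e_\alpha,\mu)=(\cot\theta-\lambda)\nabla_{e_\alpha}f$ and divide by $\nabla_{e_\alpha}\ell$. But every $e_\alpha$ tangent to $\partial\C_\theta$ is tangent to the hyperplane $\{x_{n+1}=0\}$, hence orthogonal to $e=-E_{n+1}$, so $\nabla_{e_\alpha}\ell=\cos\theta\langle e_\alpha,e\rangle\equiv 0$ on $\partial\C_\theta$ (equivalently: $\ell\equiv\sin^2\theta$ is constant on $\partial\C_\theta$, so its tangential derivative vanishes). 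Your parenthetical claim that ``the tangential directions do see $e$'' is false --- only the conormal $\mu$ does, with $\langle\mu,e\rangle=\sin\theta$. The resulting equation $(\cot\theta-\lambda)\cdot 0=0$ carries no information about $\lambda$, and in fact the same degeneracy occurs for any capillary function used as a test function, since the lemma's conclusion is insensitive to the value of $\lambda$ whenever $\nabla_{e_\alpha}f=0$. So you must fall back on the direct computation of the second fundamental form of $\partial\C_\theta\subset\C_\theta$; with that substitution your argument coincides with the paper's proof.
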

	\begin{proof}
		Note that $\partial\C_{\theta}$ is a totally umbilical hypersurface in $\C_{\theta}$ with principal curvature $\cot\theta$, we have $\n_{e_{\alpha}} \mu=\cot\theta e_{\alpha}$ on $\p\C_\theta$. Together with \eqref{robin}, for all $1\leq \a \leq n-1$,
		\begin{eqnarray*}
			\n^{2}f(e_{\alpha}, \mu)=\n_{e_{\alpha}}(\n_{\mu}f)-\n_{\n_{e_{\alpha}} \mu} f=0.    \end{eqnarray*}We complete the proof.
	\end{proof}

\subsection{Mixed volume for capillary convex bodies}\label{sec2.2}
	
\begin{prop}\label{mink-sum}Let $K_i\in \mathcal{K}_\theta$, $i=1,\cdots, m$. Then the Minkowski sum $K\coloneqq\sum\limits_{i=1}^m \l_iK_i\in \mathcal{K}_\theta$, where  $\sum\limits_{i=1}^{m}\lambda_{i}>0$ and $\l_1,\ldots, \l_m\geq 0$. Moreover, \begin{eqnarray}\label{sum support}
    h_K=\sum\limits_{i=1}^m \l_i h_{K_i},
\end{eqnarray} where $h_K$ and $h_{K_i}$ are the capillary support functions of $K$ and $K_i$ respectively.
\end{prop}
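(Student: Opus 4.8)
The plan is to reduce the statement to the corresponding classical fact about support functions on the sphere, using the parametrization from Lemma \ref{para-lem} and the correspondence in Proposition \ref{equ pro}. First I would recall that the classical support function $H_{K_i}$ of the convex body $K_i$, defined on all of $\SS^n$, is sublinear, and that the Minkowski sum $K=\sum_{i=1}^m \lambda_i K_i$ (with $\lambda_i\geq 0$ and $\sum\lambda_i>0$) is again a compact convex set with nonempty interior whose classical support function is $H_K=\sum_{i=1}^m\lambda_i H_{K_i}$. This is standard convex geometry. The point is to upgrade this to the capillary picture: the capillary support function $h_{K_i}$ is simply the restriction of $H_{K_i}$ to the capillary spherical cap $\C_\theta$ (after the identification $\C_\theta\cong\SS^n_\theta$ via the translation $T$), because for $\widehat\Sigma\in\mathcal K_\theta$ the Gauss map hits exactly $\SS^n_\theta$ by Lemma \ref{gaussmap}, and $h(\xi)=\langle X(\xi),\nu(X(\xi))\rangle=\langle X(\xi),\xi-\cos\theta e\rangle$ agrees with the value of the ambient support function in the direction $\xi-\cos\theta e$. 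Hence \eqref{sum support} follows immediately from $H_K=\sum_i\lambda_i H_{K_i}$ by restriction.

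The genuinely new content is that $K\in\mathcal K_\theta$, i.e. that the boundary hypersurface $\S_K=\partial K\cap\RR^{n+1}_+$ is again a (smooth, strictly convex) capillary hypersurface meeting $\partial\RR^{n+1}_+$ at the constant angle $\theta$. I would argue this directly through the characterization of Proposition \ref{equ pro}: it suffices to show that $h_K=\sum_i\lambda_i h_{K_i}\in C^2(\C_\theta)$ is a capillary convex function, i.e. that it satisfies the Robin boundary condition $\nabla_\mu h_K=\cot\theta\, h_K$ on $\partial\C_\theta$ and that $\nabla^2 h_K+h_K\sigma>0$ on $\C_\theta$. The boundary condition is linear in $h$, so it is inherited termwise from the $h_{K_i}$'s, each of which satisfies \eqref{robin} by Lemma \ref{para-lem}(2). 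The positivity $\nabla^2 h_K+h_K\sigma=\sum_i\lambda_i(\nabla^2 h_{K_i}+h_{K_i}\sigma)$ is a nonnegative combination, with at least one strictly positive summand (since some $\lambda_i>0$ and each $\nabla^2 h_{K_i}+h_{K_i}\sigma>0$ by Lemma \ref{para-lem}(3)), hence strictly positive. Applying Proposition \ref{equ pro} in the reverse direction, $h_K$ is then the capillary support function of a capillary convex body, which must coincide with $K$; therefore $K\in\mathcal K_\theta$.

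The main obstacle, or rather the point requiring care, is confirming that the convex body produced by Proposition \ref{equ pro} from $h_K$ is literally the Minkowski sum $K$ and not merely some convex body with the same capillary support function — in other words, that the capillary support function determines the convex body within $\mathcal K_\theta$ (the injectivity direction of the correspondence). This follows from the fact, established in the proof of Proposition \ref{equ pro}, that the parametrization $X(\xi)=\nabla h(\xi)+h(\xi)T^{-1}(\xi)$ recovers $\partial K\cap\RR^{n+1}_+$ as $\nu^{-1}(\C_\theta-\cos\theta e)$, together with the elementary observation that $X$ is nothing but the gradient map of the classical support function $H_K$ restricted to directions in $\SS^n_\theta$, and $H_K=\sum_i\lambda_i H_{K_i}$ globally determines $K$. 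I would also note for completeness that one should check $\partial K$ meets $\partial\RR^{n+1}_+$ transversally with the correct angle, but this is exactly the content of the boundary computation $\langle X(\xi),e\rangle=0$ and $\nu(X(\xi))=\xi-\cos\theta e$ on $\partial\C_\theta$ carried out in the proof of Proposition \ref{equ pro}, applied verbatim to $h_K$.
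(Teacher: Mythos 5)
Your proposal is correct and follows essentially the same route as the paper: both reduce \eqref{sum support} to the classical additivity of support functions under Minkowski sums via restriction to $\C_\theta$, verify that $h_K$ is a capillary convex function using the linearity of the Robin condition and positivity of the sum $\sum_i\lambda_i A[h_{K_i}]$, and then invoke Proposition \ref{equ pro} together with the position-vector formula $X(\xi)=\nabla h(\xi)+h(\xi)T^{-1}(\xi)$ to identify the resulting capillary convex body with $K$. The paper's proof is exactly this argument, including your closing identification step.
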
 

\begin{proof} 
Let $\wt{h}_{K_i}\coloneqq\widehat{h}_{K_i}(\cdot-\cos\theta e)$ with $\widehat{h}_{K_i}$ being the classical support function of $K_i$, viewed as a function on $\SS^n+\cos\theta e$ for $1\leq i\leq m$. Then $\wt{h}_K=\sum\limits_{i=1}^m \l_i \wt{h}_{K_i}$ is the support function of $K$, which is defined on $\SS^n+\cos\theta e$.
On the other hand, denote
$h_{K_i}\in C^2(\C_{\theta})$ the capillary support function of $K_i\in \K_\theta$, then 
$h_{K_i}=\wt{h}_{K_i} \big|_{\C_\theta}$.
Define \begin{eqnarray*}\label{sum support_1}
    h\coloneqq\sum_{i=1}^m \l_i h_{K_i}= \sum_{i=1}^m \l_i \wt{h}_{K_i}|_{\C_\theta}=\wt{h}_K|_{\C_\theta}.
\end{eqnarray*} 
From Lemma \ref{para-lem} (2) and (3), it is easy to see $h\in C^2(\C_{\theta})$ is a capillary convex function, together with Proposition \ref{equ pro}, it gives rise to a capillary convex body $L\in \mathcal{K}_\theta$ with $$h =\wt{h}_L|_{\C_\theta} ~~ \text{ in } ~~ \C_\theta.$$ From Lemma \ref{para-lem} (1), the position vector of $\p L\cap  {\RR^{n+1}_+}$ is given by
\begin{eqnarray*}
    X(\xi) &=& \n h(\xi) +h(\xi) T^{-1}(\xi)
   = \n \wt{h}_K(\xi)+\wt{h}_K(\xi) T^{-1}(\xi),\quad \xi\in \C_\theta.
\end{eqnarray*}The right hand side is exactly the position vector of $K$ restricting on $\C_\theta$, hence we conclude that $K=L\in \K_\theta.$ The last assertion follows from $h_K=\wt{h}_K|_{\C_\theta}$.
\end{proof}
	 
  Recall that the mixed discriminant $Q: (\RR^{n\times n})^n \to \RR$  is defined by
 \begin{eqnarray*}
     \det(\l_1A_1+\cdots +\l_m A_m)=\sum_{i_1,\ldots,i_n=1}^m \l_{i_1}\cdots \l_{i_n} Q(A_{i_1},\cdots, A_{i_n})
 \end{eqnarray*}for $m\in \NN$, $\l_1,\ldots, \l_m\geq 0$ and  the real symmetric  matrices $A_1, \ldots ,A_m\in \RR^{n\times n}$. If $(A_k)_{ij}$ denotes the $(i,j)$-element of the matrix $A_k$, then
		\begin{eqnarray*}
Q(A_{1}, \cdots, A_{n} )=\frac{1}{n!}\sum_{i_1,\cdots, i_n, j_1,\cdots, j_n}\delta_{j_{1}\cdots j_{n}}^{i_{1}\cdots i_{n}}(A_{1})_{i_{1}j_{1}}\cdots (A_{n})_{i_{n}j_{n}}.
\end{eqnarray*}It is clear that the mixed discriminant $Q$ is a multilinear and symmetric function (see, e.g., \cite[Chapter 4, Section 25.4]{BZ}).

For any $f\in C^2(\C_\theta)$, we denote $A[f]\coloneqq\n^2 f+f\s$. We introduce the multi-variable function $V: (C^{2}(\C_{\theta}))^{n+1}\rightarrow \RR$ given  by 
		\begin{eqnarray}\label{defn-V1}
			V(f_{1}, \cdots, f_{n+1})=\frac{1}{n+1}\int_{\C_{\theta}}f_{1}Q\left(A[f_{2}], \cdots, A[f_{n+1}]\right)d\s.
		\end{eqnarray}
		for $f_{i}\in C^{2}(\C_\theta), 1\leq i\leq n+1$.
	
\begin{prop}
		\label{lem symmetric}
		Let $f_{1}, f_{2}, \cdots, f_{n+1}$ be capillary functions, then $V$ is a symmetric and multi-linear function.
	\end{prop}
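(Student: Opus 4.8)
The multilinearity of $V$ in each slot is immediate: for a fixed choice of the other arguments, the map $f_1\mapsto f_1$ is linear in the first slot, $f\mapsto A[f]=\n^2 f+f\s$ is linear, and $Q$ is multilinear by definition, so $V(f_1,\dots,f_{n+1})$ is linear in each $f_i$ separately. Hence the only substantive content is the full symmetry of $V$ under permutations of $f_1,\dots,f_{n+1}$. Since $Q$ is already symmetric in its $n$ matrix arguments $A[f_2],\dots,A[f_{n+1}]$, it suffices to show that $V$ is invariant under the transposition of $f_1$ and $f_2$, i.e.
\begin{eqnarray}\label{plan-sym}
\int_{\C_\theta} f_1\, Q\big(A[f_2],A[f_3],\dots,A[f_{n+1}]\big)\,d\s
=\int_{\C_\theta} f_2\, Q\big(A[f_1],A[f_3],\dots,A[f_{n+1}]\big)\,d\s.
\end{eqnarray}

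\textbf{Main step: integration by parts.} The plan is to expand $Q(A[f_2],\dots)$ using $A[f_2]=\n^2 f_2+f_2\s$ and move the two derivatives falling on $f_2$ over to $f_1$ via integration by parts on $\C_\theta$, exploiting the divergence-free property of the cofactor-type tensor built from $A[f_3],\dots,A[f_{n+1}]$. Concretely, write $Q(A[f_2],A[f_3],\dots,A[f_{n+1}])=\frac{1}{n}\,T^{ij}(A[f_3],\dots,A[f_{n+1}])\,(A[f_2])_{ij}$ where $T^{ij}$ is the (symmetric) derivative of $\det$-polarization in the first slot; the key algebraic fact (a consequence of the commutation identities on the space form $\C_\theta$, exactly as in the classical computation on $\SS^n$) is that $\n_i T^{ij}=0$. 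Then
\begin{eqnarray*}
\int_{\C_\theta} f_1\, T^{ij}(A[f_2])_{ij}\,d\s
&=&\int_{\C_\theta} f_1\, T^{ij}\big(\n_i\n_j f_2+f_2\s_{ij}\big)\,d\s\\
&=&\int_{\C_\theta} T^{ij}\big(\n_j f_2\,\n_i f_1 \text{ terms after parts}\big)+\cdots\\
&=&\int_{\C_\theta} f_2\, T^{ij}\big(\n_i\n_j f_1+f_1\s_{ij}\big)\,d\s + (\text{boundary terms}),
\end{eqnarray*}
where the interior manipulation is symmetric in $f_1,f_2$ (each loses two derivatives to the other, with $T^{ij}$ passing through untouched because it is divergence-free and symmetric, and the zeroth-order $f_2\s_{ij}$ term pairs symmetrically with $f_1$).

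\textbf{The obstacle: the boundary terms.} The nontrivial point — and where the capillarity hypothesis enters — is that integrating by parts twice on a manifold with boundary produces boundary integrals over $\p\C_\theta$, and these must cancel. After one integration by parts one picks up $\int_{\p\C_\theta} f_1\, T^{ij}\mu_i\,\n_j f_2\,ds$ and after the second a term $\int_{\p\C_\theta} f_2\, T^{ij}\mu_i\,\n_j f_1\,ds$, plus cross terms. The plan is to decompose $\n f_k$ along $\p\C_\theta$ into its conormal part $(\n_\mu f_k)\mu$ and its tangential part, use the Robin condition \eqref{robin}, $\n_\mu f_k=\cot\theta\, f_k$, to rewrite the conormal contributions, and use Lemma \ref{pro-f}, which gives $\n^2 f_k(e_\a,\mu)=0$ on $\p\C_\theta$, to control the mixed tangential–conormal components of the Hessians appearing inside $T^{ij}$ (equivalently, $\mu$ is an eigendirection of $A[f_k]$ on the boundary). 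Combining these, each boundary integrand becomes a symmetric expression in $f_1,f_2$ of the form $\cot\theta\, f_1 f_2\,(\text{something})_{\mu\mu}$ minus its mirror image, and they cancel in pairs; this is precisely the ``self-adjointness follows from the capillarity'' remark in the introduction. I expect this boundary bookkeeping — keeping track of exactly which components of $T^{ij}$ survive on $\p\C_\theta$ and verifying the cancellation slot by slot — to be the only delicate part; the interior computation is a verbatim transcription of the classical mixed-volume symmetry argument on $\SS^n$.
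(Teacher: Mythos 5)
Your proposal is correct and follows essentially the same route as the paper's proof: reduce to a single transposition using the symmetry of $Q$, integrate by parts twice against the divergence-free cofactor tensor $Q^{ij}$, and cancel the boundary integrals by combining the Robin condition \eqref{robin} with Lemma \ref{pro-f} (which forces the off-diagonal components $Q^{\a n}$ to vanish on $\p\C_\theta$, leaving only $Q^{nn}(f_1\n_\mu f_2-f_2\n_\mu f_1)=0$). The only cosmetic discrepancy is your factor $\frac1n$ in $Q=\frac1n T^{ij}(A[f_2])_{ij}$; since $Q$ is linear in its first slot, Euler's identity gives $Q=Q^{ij}(A[f_2])_{ij}$ with no such factor, but this does not affect the argument.
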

	\begin{proof}
		The multilinear property is obvious. By the symmetry of $Q$,  
	we just need to prove \begin{eqnarray}\label{sym}
			V(f_{1}, f_{2}, \cdots, f_{n+1})=V(f_{2}, f_{1}, \cdots, f_{n+1}).  
		\end{eqnarray}
	It is known that 
		\begin{eqnarray}\label{Q-def}
			Q(A[f_{2}], \cdots, A[f_{n+1}])=\sum_{i,j=1}^n Q^{ij}\cdot A[f_2]_{ij},
		\end{eqnarray}
		where
		\begin{eqnarray*}
			Q^{ij}\coloneqq\frac{\p Q}{\p A[f_2]_{ij}}
		\end{eqnarray*}
		satisfying
		\begin{eqnarray}\label{div free}
			\sum\limits_{i=1}^{n}\n_{i} Q^{ij} =0,\quad \text{for~all }~1\leq j\leq n.
		\end{eqnarray}
		Using \eqref{Q-def}, \eqref{div free}  and  integrating by parts twice, we get
		\begin{eqnarray*}
			V(f_{1}, f_{2},\cdots, f_{n+1})&=&\frac{1}{n+1}\int_{\C_{\theta}} f_{1}Q^{ij}\cdot (\n^{2}f_{2}(e_{i}, e_{j})+f_{2}\delta_{ij})d\s\\
			&=&\frac{1}{n+1}\int_{\C_{\theta}} f_{2}Q^{ij}\cdot (\n^{2}f_{1}(e_{i}, e_{j})+f_{1}\delta_{ij})d\s \\
			&&+\frac{1}{n+1}\int_{\partial\C_{\theta}} \left[f_{1} Q^{ij}\n_{e_{j}}f_{2}\cdot \<\mu, e_{i}\>-f_{2}Q^{ij}\n_{e_{i}}f_{1}\cdot\<\mu, e_{j}\>\right] ds.
		\end{eqnarray*}
		Here we use an orthonormal basis $\{e_i\}$ on $\C_\theta$ such that $e_n=\mu$ along $\p \C_\theta$.
	On $\p\C_\theta$, from \eqref{robin}, there holds
		\begin{eqnarray*}
			f_{1}Q^{ij}\n_{e_{j}}f_{2}\<\mu, e_{i}\>-f_{2}Q^{ij}\n_{e_{i}}f_{1}\<\mu, e_{j}\>=
					Q^{nn}\left(f_{1}\n_{\mu}f_{2}-f_{2}\n_{\mu}f_{1}\right)
			=0.
		\end{eqnarray*}
		Hence we complete the proof of \eqref{sym}. 
	\end{proof}

The following is a direct corollary of Proposition \ref{lem symmetric}. 
	\begin{cor}\label{Minkowski}
		For any capillary function $f$, there holds
		\begin{eqnarray}\label{minkowski formula}
			\int_{\C_{\theta}}f H_{k-1}(A[f])d\s=\int_{\C_{\theta}}\ell H_{k}(A[f])d\s, \quad 1\leq k\leq n,
		\end{eqnarray}
		where $H_{k}(A[f])$ is the $k$-th normalized elementary symmetric function evaluated at $A[f]$.   
	\end{cor}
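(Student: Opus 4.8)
The corollary is supposed to follow directly from the symmetry and multilinearity of $V$ established in Proposition \ref{lem symmetric}, by choosing the $f_i$ among the two functions $f$ and $\ell$. Recall $\ell$ is the capillary support function of $\widehat{\C_\theta}$, and by Lemma \ref{para-lem}(2) both $f$ and $\ell$ are capillary functions (in the sense of Definition \ref{defn-convex capillary fun}), so Proposition \ref{lem symmetric} applies with any choice of arguments drawn from $\{f,\ell\}$. The plan is to evaluate $V$ at the tuple consisting of $k$ copies of $f$ and $n+1-k$ copies of $\ell$ in two different orders, exploiting symmetry to put either an $f$ or an $\ell$ in the first slot, and then to identify the resulting integrands with the expressions appearing in \eqref{minkowski formula}.

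\textbf{Key steps.} First I would record the elementary identity $A[\ell]=\n^2\ell+\ell\s=\s$ on $\C_\theta$; indeed, writing $\ell(\xi)=\sin^2\theta+\cos\theta\langle\xi,e\rangle$, the coordinate function $\langle\xi,e\rangle$ restricted to the unit sphere $\C_\theta$ satisfies $\n^2\langle\xi,e\rangle=-\langle\xi,e\rangle\s$, so $\n^2\ell=-\cos\theta\langle\xi,e\rangle\s$ and hence $A[\ell]=\sin^2\theta\,\s+\cos\theta\langle\xi,e\rangle\s+(-\cos\theta\langle\xi,e\rangle\s)$... wait, let me recompute: $A[\ell]=\n^2\ell+\ell\s = -\cos\theta\langle\xi,e\rangle\s + (\sin^2\theta+\cos\theta\langle\xi,e\rangle)\s = \sin^2\theta\,\s$. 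So in fact $A[\ell]=\sin^2\theta\,\s$; this scalar multiple of the identity is harmless and can be absorbed. Second, I would use the basic property of the mixed discriminant that $Q(\underbrace{\s,\dots,\s}_{n-k},A,\dots,A) = \binom{n}{k}^{-1}\cdot(\text{normalization})\cdot \sigma_k(A)$ — more precisely, that evaluating $Q$ with $n-k$ copies of the identity and $k$ copies of a symmetric matrix $B$ yields a constant multiple of the $k$-th elementary symmetric polynomial $\sigma_k(B)$, so that $Q(A[\ell],\dots,A[\ell],A[f],\dots,A[f])$ with $n-k$ copies of $A[\ell]=\sin^2\theta\,\s$ and $k$ copies of $A[f]$ equals a constant times $H_k(A[f])$. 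Third, I apply Proposition \ref{lem symmetric}: on one hand
\[
V(\ell,\underbrace{f,\dots,f}_{k},\underbrace{\ell,\dots,\ell}_{n-k})=\frac{1}{n+1}\int_{\C_\theta}\ell\,Q\big(\underbrace{A[f],\dots,A[f]}_{k},\underbrace{A[\ell],\dots,A[\ell]}_{n-k}\big)\,d\s,
\]
while by symmetry this equals
\[
V(f,\underbrace{f,\dots,f}_{k-1},\underbrace{\ell,\dots,\ell}_{n-k+1})=\frac{1}{n+1}\int_{\C_\theta}f\,Q\big(\underbrace{A[f],\dots,A[f]}_{k-1},\underbrace{A[\ell],\dots,A[\ell]}_{n-k+1}\big)\,d\s.
\]
Identifying the two mixed discriminants with $H_k(A[f])$ and $H_{k-1}(A[f])$ respectively (up to the same overall constant $(\sin^2\theta)^{n-k}$, which cancels), gives exactly \eqref{minkowski formula}.

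\textbf{Main obstacle.} There is essentially no serious difficulty: the content of the corollary is entirely in Proposition \ref{lem symmetric}, whose proof is the place where the capillary boundary condition \eqref{robin} is genuinely used (to kill the boundary term in the integration by parts). The only thing requiring a little care is bookkeeping: making sure the normalization constants relating $Q(\s,\dots,\s,B,\dots,B)$ to $H_k(B)$ are consistent on both sides so that they cancel, and checking the convention $H_0\equiv 1$ handles the endpoint $k=1$ (where the left side is $\int_{\C_\theta} f\,d\s$). I would state the mixed-discriminant normalization as a one-line lemma or cite \cite[Chapter 4, Section 25.4]{BZ}, and otherwise the proof is two lines invoking Proposition \ref{lem symmetric}.
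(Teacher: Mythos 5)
Your overall route is exactly the paper's: apply the symmetry of $V$ from Proposition \ref{lem symmetric} to a tuple built from $f$ and $\ell$, and identify the mixed discriminants with normalized elementary symmetric functions. However, there is a concrete error in your computation of $A[\ell]$, and the way you try to absorb it does not work. The surface $\C_\theta$ is the unit sphere centered at $\cos\theta e$, not at the origin, so the position vector relative to the center is $\xi-\cos\theta e$ and the correct Hessian identity is $\n^2\<\xi,e\>=-\<\xi-\cos\theta e,e\>\,\s=-(\<\xi,e\>-\cos\theta)\,\s$, not $\n^2\<\xi,e\>=-\<\xi,e\>\,\s$ as you used. With the correct formula one gets
\begin{equation*}
A[\ell]=\n^2\ell+\ell\s=-\cos\theta\bigl(\<\xi,e\>-\cos\theta\bigr)\s+\bigl(\sin^2\theta+\cos\theta\<\xi,e\>\bigr)\s=\s,
\end{equation*}
which is what the paper records, rather than your $A[\ell]=\sin^2\theta\,\s$.

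This matters because your claim that the factor $(\sin^2\theta)^{n-k}$ is "the same overall constant" on both sides and cancels is false: the two evaluations of $V$ you compare contain $n-k$ and $n-k+1$ copies of $A[\ell]$ respectively, so if $A[\ell]$ were $c\,\s$ with $c\neq 1$ you would obtain $c^{\,n-k}\int_{\C_\theta}\ell H_k(A[f])\,d\s=c^{\,n-k+1}\int_{\C_\theta}f H_{k-1}(A[f])\,d\s$, i.e.\ the identity \eqref{minkowski formula} with a spurious factor $\sin^2\theta$ on one side. The proof is saved only because $c=1$; once you correct the Hessian computation, the rest of your argument (the normalization $Q(B,\dots,B,\s,\dots,\s)=H_j(B)$ with $j$ copies of $B$, and the appeal to Proposition \ref{lem symmetric}) coincides with the paper's proof and is fine, including the endpoint $k=1$ via $H_0\equiv 1$.
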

	\begin{proof} Recall $\ell(\xi)=\sin^{2}\theta+\cos\theta \<\xi, e\>.$ Note 
	$$\n^2(\xi-\cos\theta e)=-(\xi-\cos\theta e).$$ Hence $A[\ell]=\n^2 \ell+\ell \s=\s$.
		From Proposition \ref{lem symmetric}, we have 
		\begin{eqnarray*}\label{prop2.8 apply1}
			V(\underbrace{f, \cdots, f}_{k~\text{copies}}, \ell,\cdots, \ell)=V(\ell, \underbrace{f,\cdots, f}_{k~\text{copies}}, \ell, \cdots, \ell).
		\end{eqnarray*}
		By definition, we have
			\begin{eqnarray*}
		V(\underbrace{f, \cdots, f}_{k~\text{copies}}, \ell ,\cdots, \ell ) &=&\frac{1}{n+1}\int_{\C_{\theta}}fQ(\underbrace{A[f], \cdots, A[f]}_{(k-1)~\text{copies}}, A[\ell ], \cdots,A[\ell ])d\s \\
		&=& \frac{1}{n+1}\int_{\C_{\theta}}f H_{k-1} (A[f])
		d\s,
		\end{eqnarray*}
		and \begin{eqnarray*}
		V(\ell, \underbrace{f,\cdots, f}_{k~\text{copies}},
		 \ell, \cdots, \ell)	 = \frac 1 {n+1} \int_{\C_{\theta}}\ell H_{k}(A[f])d\s.
		\end{eqnarray*}
The assertion follows from the previous Proposition.
\end{proof}

\begin{rem}When $f$ is the capillary support function of a convex capillary hypersurface, by using the fact $\tilde{\nu}: \S\to \C_\theta$ is a diffeomorphism, we can see that  identity \eqref{minkowski formula} is equivalent to the Minkowski-type formula in \cite[Proposition~2.6]{WWX2023}:
\begin{eqnarray*}
\int_\S H_{n-k+1}\<x, \nu\>dA=\int_{\S}(1+\cos\theta \<\nu,e\>)H_{n-k} dA,
\end{eqnarray*}
where $H_{j}$ is the normalized $j$-th mean curvature of $\S$.
\end{rem}  
 
Next, we interpret the classical mixed volume for capillary convex bodies in $\K_\theta$ as an integral over $\C_\theta$ by using the capillary support function.
In Proposition \ref{mink-sum}, we proved that, for $K_1,\ldots, K_m\in \K_\theta$, the Minkowski sum  $K=\sum\limits_{i=1}^m \l_iK_i\in \mathcal{K}_\theta$. Recall that the mixed volume $V(K_{i_{1}},\cdots, K_{i_{n+1}})$ is defined by
 \begin{eqnarray*}
			|K| &=&\sum\limits_{i_{1},\cdots, i_{n+1}=1}^{m}\lambda_{i_{1}}\cdots\lambda_{i_{n+1}}V(K_{i_{1}},\cdots, K_{i_{n+1}}).
		\end{eqnarray*}

 \begin{prop}\label{mixed-volume-eq} Let $K_1,\ldots, K_m\in \K_\theta$ and  $h_i$ be the  capillary support function of $K_i$. Then we have 
		\begin{eqnarray}\label{mixed volumes}
			V(K_{i_{1}}, \cdots, K_{i_{n+1}})=V(h_{i_{1}}, \cdots, h_{i_{n+1}})=\frac{1}{n+1}\int_{\C_{\theta}}h_{i_{1}}Q\left(A[h_{i_{2}}], \cdots, A[h_{i_{n+1}}]\right)d\s.
		\end{eqnarray}
\end{prop}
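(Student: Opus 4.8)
The plan is to compute the volume $|K|$ of the Minkowski combination $K=\sum_{i=1}^m\lambda_i K_i$ in two different ways and compare coefficients of the monomial $\lambda_{i_1}\cdots\lambda_{i_{n+1}}$. On the one hand, $|K|$ is a homogeneous polynomial of degree $n+1$ in $(\lambda_1,\dots,\lambda_m)$ whose coefficients are by definition the mixed volumes $V(K_{i_1},\dots,K_{i_{n+1}})$ (symmetrized in the usual way). On the other hand, by Proposition \ref{mink-sum} we know $K\in\mathcal K_\theta$ with capillary support function $h_K=\sum_{i=1}^m\lambda_i h_i$, so we may use the volume formula
\begin{eqnarray*}
(n+1)|K|=\int_{\C_\theta}h_K\det(A[h_K])\,d\s
\end{eqnarray*}
recorded in the Introduction. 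Since $A$ is linear in its argument, $A[h_K]=\sum_{i=1}^m\lambda_i A[h_i]$, and expanding the determinant via the definition of the mixed discriminant $Q$ gives
\begin{eqnarray*}
\det(A[h_K])=\sum_{i_2,\dots,i_{n+1}=1}^m\lambda_{i_2}\cdots\lambda_{i_{n+1}}\,Q(A[h_{i_2}],\dots,A[h_{i_{n+1}}]).
\end{eqnarray*}
Substituting $h_K=\sum_{i_1}\lambda_{i_1}h_{i_1}$ and collecting terms yields
\begin{eqnarray*}
(n+1)|K|=\sum_{i_1,\dots,i_{n+1}=1}^m\lambda_{i_1}\cdots\lambda_{i_{n+1}}\int_{\C_\theta}h_{i_1}Q(A[h_{i_2}],\dots,A[h_{i_{n+1}}])\,d\s.
\end{eqnarray*}

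Next I would match this with the defining expansion of $|K|$ in terms of mixed volumes. The subtlety is that the expression $\int_{\C_\theta}h_{i_1}Q(A[h_{i_2}],\dots,A[h_{i_{n+1}}])\,d\s$ is, a priori, only symmetric in the last $n$ indices $i_2,\dots,i_{n+1}$; to conclude that it equals $(n+1)V(h_{i_1},\dots,h_{i_{n+1}})$ with $V$ fully symmetric, one invokes Proposition \ref{lem symmetric}, which guarantees that $V(f_1,\dots,f_{n+1})=\frac1{n+1}\int_{\C_\theta}f_1Q(A[f_2],\dots,A[f_{n+1}])\,d\s$ is in fact symmetric under all permutations when the $f_j$ are capillary functions (here the capillary support functions $h_i$ are capillary functions by Proposition \ref{equ pro}). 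Hence the coefficient polynomial in the two expansions of $|K|$ has symmetric coefficients in both presentations, and uniqueness of the coefficients of a polynomial in independent variables $\lambda_1,\dots,\lambda_m$ forces
\begin{eqnarray*}
V(K_{i_1},\dots,K_{i_{n+1}})=\frac1{n+1}\int_{\C_\theta}h_{i_1}Q(A[h_{i_2}],\dots,A[h_{i_{n+1}}])\,d\s=V(h_{i_1},\dots,h_{i_{n+1}}),
\end{eqnarray*}
which is the claim.

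\textbf{Main obstacle.} The only genuinely delicate point is the justification of the volume formula $(n+1)|K|=\int_{\C_\theta}h_K\det(A[h_K])\,d\s$ itself, i.e. that the boundary contribution from $\widehat{\p\Sigma}\subset\p\RR^{n+1}_+$ drops out when one applies the divergence theorem to $\mathrm{div}(X)=n+1$ over $\widehat{\Sigma}$. This is exactly where the capillary boundary condition \eqref{robin}/\eqref{boundary-cond} is used: on the flat portion of $\p\widehat\Sigma$ the position vector $X$ is tangential (since $\langle X,e\rangle=0$ there, as shown in the proof of Proposition \ref{equ pro}), so $\langle X,E_{n+1}\rangle=0$ and the flux through $\widehat{\p\Sigma}$ vanishes, leaving only $\int_{\Sigma}\langle X,\nu\rangle\,dA$, which transforms to $\int_{\C_\theta}h_K\det(A[h_K])\,d\s$ via the parametrization $X(\xi)=\n h_K(\xi)+h_K(\xi)T^{-1}(\xi)$ and Lemma \ref{para-lem}(3) (the Jacobian of $\tilde\nu^{-1}$ is $\det(A[h_K])$ with respect to $d\s$). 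Once this identity is in hand, the remainder is the purely algebraic bookkeeping of matching polynomial coefficients, together with the symmetry input from Proposition \ref{lem symmetric}. I would therefore present the proof by first recording the volume identity (citing the Introduction formula or reproving it in one line), then expanding, then comparing coefficients.
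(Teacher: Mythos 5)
Your proposal is correct and follows essentially the same route as the paper: express $(n+1)|K|$ as $\int_{\C_\theta}h_K\det(A[h_K])\,d\s$ via the capillary Gauss map, expand $h_K=\sum_i\lambda_i h_i$ and the determinant into mixed discriminants, and compare with the defining polynomial expansion of the mixed volumes; your extra care about full symmetry (via Proposition \ref{lem symmetric}) and about the vanishing flux through $\widehat{\p\S}$ only makes explicit what the paper leaves implicit. One small correction: the vanishing of $\langle X,e\rangle$ on the flat part is automatic for any region with boundary on $\p\RR^{n+1}_+$ (points there have zero last coordinate), so it is not where the capillary condition enters; that condition is used instead to ensure $\tilde\nu$ is a diffeomorphism onto $\C_\theta$ and that $h_K$ restricted to $\C_\theta$ captures the whole of $\S$.
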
	
	
	\begin{proof}		
  Since $\tilde{\nu}: \S\to \C_\theta$ is a diffeomorphism, we have
		\begin{eqnarray*}
			(n+1)|K|=\int_\S \<x,\nu\>dA=\int_{\C_\theta}h_{K}\det \left(A[h_{K}]\right)d\s.\end{eqnarray*} 
Recall that $h_K =\sum\limits_{i=1}^m \l_i h_i$ in $\C_\theta$, then 
		\begin{eqnarray*}
			(n+1)|K|&=&\int_{\C_\theta}h_{K}\det \left(A[h_{K}]\right)d\s
   \\&=&\sum\limits_{i_{1},\cdots, i_{n+1}=1}^{m} \lambda_{i_{1}}\cdots\lambda_{i_{n+1}}\left(\int_{\C_{\theta}}h_{i_{1}}Q(A[h_{i_{2}}], \cdots, A[h_{i_{n+1}}] )d\s\right).\end{eqnarray*}
   The assertion follows from the definition of the mixed volume.
	\end{proof}

We collect the following basic properties for $V(h_1,\cdots, h_{n+1})$.
\begin{prop}\label{prop of mixed vol}Let  $K_1,\ldots, K_{n+1}\in \K_\theta$ and $h_i$ be the  capillary support function of $K_i$. Then    
\begin{enumerate}
    \item $V(h_1,h_1,\cdots, h_1)=|K_1|$.

   \item $V(h_1,h_2,\cdots, h_{n+1})$ is symmetric and multilinear in its arguments.

   \item $V(h_1,h_2,\cdots, h_{n+1})$ is invariant under the horizontal translation of $K_j$ in $\ol{\RR^{n+1}_+}$.

   \item $V(h_1,h_2,\cdots, h_{n+1})\geq 0$, if $h_i\ge 0$.
\end{enumerate}
\end{prop}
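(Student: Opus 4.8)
The plan is to verify the four properties of Proposition \ref{prop of mixed vol} essentially by unwinding definitions, using the identities already established in Section \ref{sec2.2}. For property (1), I would start from Proposition \ref{mixed-volume-eq} with all $K_i=K_1$, so that $V(h_1,\dots,h_1)=\frac{1}{n+1}\int_{\C_\theta}h_1\det(A[h_1])\,d\s$; by the first displayed identity in the proof of Proposition \ref{mixed-volume-eq} (namely $(n+1)|K|=\int_{\C_\theta}h_K\det(A[h_K])\,d\s$, which is itself a consequence of $\tilde\nu$ being a diffeomorphism and Lemma \ref{para-lem}(1)), this integral equals $(n+1)|K_1|$, giving the claim. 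Property (2) is immediate: multilinearity is visible from the definition \eqref{defn-V1} since $A[\cdot]$ is linear, and symmetry is precisely Proposition \ref{lem symmetric}, whose hypotheses (the $h_i$ are capillary functions) are satisfied because each $h_i$ is a capillary \emph{convex} function by Proposition \ref{equ pro}.

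For property (3), I would argue as follows. A horizontal translation of $K_j$ by a vector $v=\sum_{i=1}^n a_i E_i$ (with $E_i$ horizontal) changes the classical support function $\widehat h_{K_j}$ to $\widehat h_{K_j}+\langle\cdot,v\rangle$, hence, restricting to $\C_\theta$ and recalling $h_{K_j}=\wt h_{K_j}|_{\C_\theta}$ from the proof of Proposition \ref{mink-sum}, the capillary support function changes to $h_j+\langle\cdot,v\rangle$. One checks that the linear function $g(\xi):=\langle\xi,v\rangle$ satisfies $A[g]=\n^2 g+g\s=0$ (since the restriction of a linear function to the sphere is a first eigenfunction), and also that $g$ is a capillary function, i.e. $\n_\mu g=\cot\theta\, g$ on $\p\C_\theta$ — this uses that on $\p\C_\theta$ one has $\xi\perp v$ for horizontal $v$ (because $\p\C_\theta$ lies in the hyperplane $\{x_{n+1}=0\}$ shifted appropriately; more precisely $\langle\xi,e\rangle$ is constant on $\p\C_\theta$, forcing the radial behavior), so that $g$ together with Proposition \ref{lem symmetric} may be inserted. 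Then by multilinearity and symmetry, replacing $h_j$ by $h_j+g$ changes $V$ by a sum of terms each of which is $V(g,\dots)$ up to symmetry; since $A[g]=0$, any term with $g$ not in the first slot vanishes, and the single term $V(g,h_{i_1},\dots)$ with $g$ in the first slot can be rewritten, via symmetry (Proposition \ref{lem symmetric}, which permits moving $g$ out of the first slot at the cost only of a vanishing boundary term since $g$ is a capillary function), as $V(h_{i_1},g,\dots)$ which again vanishes because $A[g]=0$. Hence $V$ is unchanged.

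For property (4), assuming all $h_i\ge 0$, I would use Proposition \ref{mixed-volume-eq} together with the nonnegativity of the mixed discriminant on positive semidefinite matrices. Concretely, each $h_i$ is the capillary support function of $K_i\in\K_\theta$, so $A[h_i]=\n^2 h_i+h_i\s>0$ on $\C_\theta$ by Lemma \ref{para-lem}(3); the mixed discriminant $Q(A_2,\dots,A_{n+1})$ of positive semidefinite symmetric matrices is nonnegative (a standard fact, see e.g. \cite[Chapter 4, Section 25.4]{BZ}), and $h_{i_1}\ge 0$ by hypothesis, so the integrand $h_{i_1}Q(A[h_{i_2}],\dots,A[h_{i_{n+1}}])$ is pointwise nonnegative, whence $V(h_1,\dots,h_{n+1})\ge 0$.

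The only genuinely nontrivial point is the translation invariance in (3): one must be careful that the linear perturbation $g(\xi)=\langle\xi,v\rangle$ really is a capillary function on $\C_\theta$ (so that the symmetry of $V$ from Proposition \ref{lem symmetric} applies with $g$ as one of the arguments), and must correctly identify how a horizontal translation of the convex body acts on the capillary support function restricted to $\C_\theta$. I expect this verification — checking $\n_\mu g=\cot\theta\,g$ on $\p\C_\theta$ and $A[g]=0$ — to be the main (though still routine) obstacle; everything else is a direct appeal to Propositions \ref{mink-sum}, \ref{lem symmetric}, \ref{mixed-volume-eq} and Lemma \ref{para-lem}.
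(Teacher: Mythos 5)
The paper states this proposition without proof, so there is nothing to compare against line by line; your derivation from Propositions \ref{mink-sum}, \ref{lem symmetric}, \ref{mixed-volume-eq} and Lemma \ref{para-lem} is exactly the intended one, and items (1), (2) and (4) are handled correctly. Item (3) is also correct in outline, and you rightly single out the key verification that $g(\xi)=\langle\xi,v\rangle$ (for horizontal $v$) is a capillary function with $A[g]=0$ — but your stated reason for the boundary condition is wrong. You claim that $\xi\perp v$ on $\p\C_\theta$; this is false (e.g.\ $\xi=(\sin\theta,0,\dots,0)\in\p\C_\theta$ and $v=E_1$ give $\langle\xi,v\rangle=\sin\theta$), and if it were true the boundary condition $\n_\mu g=\cot\theta\,g$ would trivialize to $0=0$, which is not what happens. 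The correct computation uses the explicit formula for the conormal from the proof of Lemma \ref{para-lem}: on $\p\C_\theta$ one has $\mu=\tfrac{1}{\sin\theta}\bigl(e+\cos\theta(\xi-\cos\theta e)\bigr)=\cot\theta\,\xi+\sin\theta\,e$, hence
\begin{equation*}
\n_\mu g=\langle \mu, v\rangle=\cot\theta\,\langle\xi,v\rangle+\sin\theta\,\langle e,v\rangle=\cot\theta\, g,
\end{equation*}
where the relevant orthogonality is $e\perp v$ (horizontality of $v$), not $\xi\perp v$. Likewise $A[g]=0$ because $g(\xi)=\langle\xi-\cos\theta e,v\rangle$ for horizontal $v$ and $\n^2(\xi-\cos\theta e)=-(\xi-\cos\theta e)$, as noted in the proof of Corollary \ref{Minkowski}. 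With that repair, the cancellation argument (move $g$ out of the first slot by Proposition \ref{lem symmetric}, then use $A[g]=0$) goes through and the proof is complete.
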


 Finally, we demonstrate that the quermassintegral $\V_{k,\theta}(\wh{\S})$, which was introduced in \cite{WWX2023}, is a specific case of the mixed volume.
To see this, we need the following reinterpretation of $\V_{k,\theta}(\wh{\S})$.

\begin{lem}\label{prop-two forms V-k} Let $\wh{\S}\in \K_\theta$. 
  Then    
  \begin{eqnarray}\label{eq_last}
  \V_{k+1,\theta} (\widehat  \S) =\frac 1 {n+1}
    \int_{\S}(1+\cos\theta \<\nu,e\>) H_k dA ,\quad 0\leq k\le n.
\end{eqnarray}
  	  \end{lem}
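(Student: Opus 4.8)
The plan is to unravel the two definitions of $\V_{k+1,\theta}$ — the extrinsic one \eqref{quermassintegrals} and the claimed weighted integral on the right-hand side of \eqref{eq_last} — and show they agree. First I would recall from \eqref{quermassintegrals} that
\[
\V_{k+1,\theta}(\widehat{\S})=\frac{1}{n+1}\left(\int_\S H_k\,dA-\frac{\cos\theta\sin^k\theta}{n}\int_{\p\S}H_{k-1}^{\p\S}\,ds\right),
\]
so that proving \eqref{eq_last} amounts to establishing the boundary identity
\[
\cos\theta\int_\S \<\nu,e\> H_k\,dA=-\frac{\cos\theta\sin^k\theta}{n}\int_{\p\S}H_{k-1}^{\p\S}\,ds,
\]
i.e. $\int_\S \<\nu,e\> H_k\,dA=-\frac{\sin^k\theta}{n}\int_{\p\S}H_{k-1}^{\p\S}\,ds$ (the case $\cos\theta=0$ being trivial). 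I expect this to be exactly the content of a divergence/Minkowski-type identity already available in \cite{WWX2023}; indeed this kind of relation is what makes the functionals in \eqref{quermassintegrals} the ``right'' capillary quermassintegrals, and it is closely tied to the Minkowski-type formula quoted in the Remark after Corollary \ref{Minkowski}.

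The cleanest route I would take is to work on $\C_\theta$ via the capillary support function $h$ of $\widehat{\S}$, using the diffeomorphism $\tilde\nu:\S\to\C_\theta$ from Lemma \ref{gaussmap}. Under this parametrization, $\nu(X(\xi))=\xi-\cos\theta e$, the area element transforms by $dA=\det(A[h])\,d\s$ only in the top-order case, but more usefully the elementary symmetric functions satisfy $H_k\,dA = \binom{n}{k}^{-1}$-normalized symmetric functions of the eigenvalues of the inverse of $A[h]$ times $\det(A[h])\,d\s$, which rearranges to the statement that $\int_\S H_k\,dA$ and $\int_\S\<\nu,e\>H_k\,dA$ become, respectively, $\int_{\C_\theta}H_{n-k}(A[h])\,d\s$ and $\int_{\C_\theta}\<\xi-\cos\theta e,e\>H_{n-k}(A[h])\,d\s = \int_{\C_\theta}(\<\xi,e\>-\cos\theta)H_{n-k}(A[h])\,d\s$. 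Thus \eqref{eq_last} reduces to showing
\[
\int_{\C_\theta}\big(1+\cos\theta(\<\xi,e\>-\cos\theta)\big)H_{n-k}(A[h])\,d\s=\int_{\C_\theta}(\ell + \text{lower-order correction})\,H_{n-k}(A[h])\,d\s,
\]
and since $\ell(\xi)=\sin^2\theta+\cos\theta\<\xi,e\> = 1-\cos^2\theta+\cos\theta\<\xi,e\>$, the weight $1+\cos\theta\<\nu,e\> = 1+\cos\theta\<\xi,e\>-\cos^2\theta$ is exactly $\ell(\xi)$. So in fact \eqref{eq_last} is precisely the assertion
\[
\V_{k+1,\theta}(\widehat{\S})=\frac{1}{n+1}\int_{\C_\theta}\ell\,H_{n-k}(A[h])\,d\s,
\]
which by Corollary \ref{Minkowski} (Minkowski formula, with the roles of $H_{n-k}$ and $H_{n-k+1-1}$) equals $\frac{1}{n+1}\int_{\C_\theta}h\,H_{n-k}(A[h])\,d\s$ up to reindexing — and this last expression is manifestly $V(\underbrace{h,\dots,h}_{k+1},\ell,\dots,\ell)$, consistent with the identification stated in the introduction.

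So the steps, in order, are: (i) pull everything back to $\C_\theta$ via $\tilde\nu$, converting $\int_\S H_k\,dA$ into $\int_{\C_\theta}H_{n-k}(A[h])\,d\s$ and tracking the weight $\<\nu,e\>=\<\xi,e\>-\cos\theta$; (ii) recognize algebraically that $1+\cos\theta\<\nu,e\>$ pulls back to $\ell(\xi)=\sin^2\theta+\cos\theta\<\xi,e\>$; (iii) match the boundary integral $\int_{\p\S}H_{k-1}^{\p\S}\,ds$ with the $\cos\theta$-part $\int_{\C_\theta}\cos\theta\<\xi,e\>H_{n-k}(A[h])\,d\s$ minus $\int_{\C_\theta}H_{n-k}(A[h])\,d\s\cdot\cos^2\theta$, using the relation between the curvatures of $\p\S\subset\p\RR^{n+1}_+$ and $A[h]$ restricted to $\p\C_\theta$ together with Lemma \ref{pro-f} (which kills the mixed normal-tangential second derivatives on the boundary); and (iv) conclude. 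The main obstacle I anticipate is step (iii): carefully relating the intrinsic $(k-1)$-th mean curvature $H_{k-1}^{\p\S}$ of the boundary hypersurface $\p\S$ to the boundary behavior of $A[h]$ on $\p\C_\theta$, including getting the factor $\sin^k\theta$ and the constant $\frac{1}{n}$ right; this is where the totally-umbilic structure of $\p\C_\theta\subset\C_\theta$ (principal curvature $\cot\theta$, as in the proof of Lemma \ref{pro-f}) and an integration by parts on $\C_\theta$ must be combined. Everything else is bookkeeping with elementary symmetric functions and the already-established dictionary between $\S$ and $\C_\theta$.
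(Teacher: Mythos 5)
Your reduction is the right one: since $1+\cos\theta\langle\nu,e\rangle$ pulls back to $\ell$ under $\tilde\nu$ (your step (ii), which is exactly how the paper passes between $\S$ and $\C_\theta$ in Proposition \ref{prop V and V-k}), the entire content of the lemma is the single boundary identity
\begin{equation*}
\int_\S H_k\langle\nu,e\rangle\,dA=-\frac{\sin^k\theta}{n}\int_{\p\S}H_{k-1}^{\p\S}\,ds ,
\end{equation*}
which you state correctly. But you never prove it: you first ``expect this to be exactly the content of a \ldots identity already available in \cite{WWX2023}'' and then defer it to your step (iii), which you yourself flag as the main obstacle and describe only as a list of ingredients to be ``combined.'' That identity \emph{is} the lemma; everything else in your write-up is the bookkeeping you say it is. Moreover, the tool you lean on, Corollary \ref{Minkowski}, cannot supply it: that formula relates $\int_{\C_\theta}\ell H_k(A[f])\,d\s$ to $\int_{\C_\theta}f H_{k-1}(A[f])\,d\s$ with \emph{no} boundary terms (they cancel by the Robin condition), so it can never produce the $\int_{\p\S}H^{\p\S}_{k-1}\,ds$ term appearing in the definition \eqref{quermassintegrals}. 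It proves the other half of the dictionary (the mixed-volume identification), not this one.

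The missing mechanism in the paper's proof is a direct computation on $\S$, not on $\C_\theta$. For $k=0$ one integrates $\mathrm{div}(e)=0$ over $\wh\S$. For $1\le k\le n$ one uses the pointwise identity $\s_k^{ij}\langle\n_i^\S e^T,e_j\rangle=-\s_k^{ij}B_{ij}\langle\nu,e\rangle=-k\s_k\langle\nu,e\rangle$ for the tangential part $e^T$ of $e$, integrates over $\S$, and exploits that the Newton tensor is divergence free, $\n_i\s_k^{ij}=0$, to convert the left side into the boundary flux $\int_{\p\S}\s_k^{\mu\mu}\langle e,\mu\rangle\,ds$. The factor $\sin^k\theta$ and the constant $1/n$ then come from the two facts recorded in \cite[Proposition 2.4]{WWX2023}: $\mu$ is a principal direction of $\S$ along $\p\S$ (so $\s_k^{\mu\mu}$ is the $(k-1)$-st symmetric function of the remaining principal curvatures), $B_{\a\b}=\sin\theta\,\widehat B_{\a\b}$, and $\langle e,\mu\rangle=\sin\theta$. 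If you want to run your argument entirely on $\C_\theta$ instead, you would have to redo this same integration by parts there, relating $H_{k-1}^{\p\S}$ to $A[h]\big|_{\p\C_\theta}$ via the umbilicity of $\p\C_\theta$ and Lemma \ref{pro-f}; that is a legitimate alternative but it is precisely the step you have not carried out, so as written the proposal has a genuine gap.
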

    \begin{proof} 
   By integration by parts, we know
   \begin{eqnarray*}
     0=\int_{\wh\S} {\rm div}(e) dx=  |\widehat{\p \S}|+\int_\S \<\nu, e\>dA.
   \end{eqnarray*}
 The assertion for  $k=0$ follows.

    Next, we consider the case $1\le k\le n$. Let $B=(B_{ij})_{1\leq i,j\leq n}$ and $\widehat{B}=(\wh{B}_{\a\b})_{1\leq \a,\b\leq n-1}$ be the second fundamental form of $\S\subset\RR^{n+1}_+$ and $\p\S \subset \p \ov {\RR_+^{n+1}}$ respectively. Let $\{e_i\}_{i=1}^n$ be an orthonormal basis on $\S$. It is known (cf. \cite[Proposition 2.4 (1) (2)]{WWX2023}) that $\mu\coloneqq e_n$ is the principal direction of $\S$ along $\p\S$ and $B_{\a\b}=\sin\theta \widehat{B}_{\a\b}$. 
    We notice that  on $\S$, 
    \begin{eqnarray}\label{xeq1}
        \s^{ij}_k\<\n^\S_ie^T, e_j\>=-\s^{ij}_kB_{ij}\<\nu, e\>= -k\s_k\<\nu, e\>,
    \end{eqnarray}
    where $\s_k^{ij}=\frac{\p \s_k}{\p B_{ij}}$.
    Integrating \eqref{xeq1} over $\S$ and integrating by parts, we see that
    \begin{eqnarray*}
        \int_\S k\s_k\<\nu, e\>dA=-\int_{\p\S}\s^{\mu\mu}_k \<e,\mu\>ds=-\sin^k\theta\int_{\p\S}\s_{k-1}^{\p\S}ds,
    \end{eqnarray*}
    where $\s_{k-1}^{\p\S}$ is $(k-1)$-th mean curvature of $\p\S\subset \p\RR_+^{n+1}$. 
    It follows that
      \begin{eqnarray*}
        \int_\S H_k\<\nu, e\>dA=-\frac{1}{n}\sin^k\theta\int_{\p\S}H^{\p\S}_{k-1}ds.
    \end{eqnarray*}
    This completes the proof.
  \end{proof}

	\begin{prop} \label{prop V and V-k}
Let $\Sigma\subset \ov{\RR^{n+1}_+}$ be a convex capillary hypersurface and 
$h$  its capillary support function. Then for  $0\leq k\leq n+1$,
		\begin {eqnarray}\label{V and V-k}
			V(\underbrace{\widehat{\Sigma}, \cdots, \widehat{\Sigma}}_{k ~\rm{ copies}} ,\widehat{\C_\theta},\cdots, \widehat{\C_\theta})=V(\underbrace{h, \cdots, h}_{k ~\rm{ copies}} ,\ell,\cdots, \ell)= \mathcal{V}_{n-k+1,\theta}(\widehat{\Sigma}).
		\end{eqnarray}
	\end{prop}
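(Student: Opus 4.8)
The plan is to prove \eqref{V and V-k} by establishing the two equalities separately. The second equality, $V(\underbrace{h,\cdots,h}_{k},\ell,\cdots,\ell)=\mathcal{V}_{n-k+1,\theta}(\widehat{\Sigma})$, is the crux and will be reduced to the geometric reinterpretation of the quermassintegrals from Lemma \ref{prop-two forms V-k}. The first equality, $V(\underbrace{\widehat{\Sigma},\cdots,\widehat{\Sigma}}_{k},\widehat{\C_\theta},\cdots,\widehat{\C_\theta})=V(\underbrace{h,\cdots,h}_{k},\ell,\cdots,\ell)$, is immediate from Proposition \ref{mixed-volume-eq}, since $h$ is the capillary support function of $\widehat{\Sigma}$ and $\ell$ is the capillary support function of $\widehat{\C_\theta}$ by \eqref{ell}.

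For the second equality I would first unwind the definition \eqref{defn-V1} together with $A[\ell]=\s$ (established in the proof of Corollary \ref{Minkowski}) and the multilinearity and symmetry of the mixed discriminant $Q$. Since inserting $A[\ell]=\s$ into $Q$ collapses it to a normalized elementary symmetric function, one gets
\begin{eqnarray*}
V(\underbrace{h,\cdots,h}_{k},\ell,\cdots,\ell)=\frac{1}{n+1}\int_{\C_\theta} h\, Q(\underbrace{A[h],\cdots,A[h]}_{k-1},\underbrace{\s,\cdots,\s}_{n-k})\,d\s=\frac{1}{n+1}\int_{\C_\theta} h\, H_{k-1}(A[h])\,d\s,
\end{eqnarray*}
at least for $1\le k\le n+1$; the cases $k=0,1$ are handled directly from the definitions of $\mathcal{V}_{0,\theta}$ and $\mathcal{V}_{1,\theta}$ together with the identity $(n+1)|\widehat{\Sigma}|=\int_{\C_\theta}\ell\det(A[\ell])d\s=\int_{\C_\theta}\ell\,d\s$, recalling that $\mathcal V_{n-k+1,\theta}$ with $k=0$ is $\mathcal V_{n+1,\theta}$ and with $k=1$ is $\mathcal V_{n,\theta}$. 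Then I would change variables via the capillary Gauss map $\tilde\nu:\Sigma\to\C_\theta$, which is a diffeomorphism by Lemma \ref{gaussmap}: under this map $d\s$ pulls back to $\det(A[h])\,dA$ (since by Lemma \ref{para-lem}(3) the eigenvalues of $A[h]$ are the principal radii, i.e. the reciprocal principal curvatures), while $h=\langle X,\nu\rangle$ and $H_{k-1}(A[h])=H_{n-k+1}(B)/\,?$— more precisely $H_{k-1}(A[h])\det(A[h])^{-1}$ relates to $H_{n-k+1}$ of $\Sigma$. Carrying this through converts $\frac{1}{n+1}\int_{\C_\theta}hH_{k-1}(A[h])\,d\s$ into $\frac{1}{n+1}\int_\Sigma \langle X,\nu\rangle H_{n-k+1}\,dA$ — wait, the indexing must be tracked carefully, and in fact the cleaner route is to use the Minkowski-type formula of Corollary \ref{Minkowski} (equivalently \cite[Proposition 2.6]{WWX2023}) to rewrite $\int_{\C_\theta}hH_{k-1}(A[h])d\s=\int_{\C_\theta}\ell H_k(A[h])d\s$ and then translate the latter via $\tilde\nu$ into $\int_\Sigma(1+\cos\theta\langle\nu,e\rangle)H_{?}\,dA$, matching exactly the right-hand side of \eqref{eq_last} in Lemma \ref{prop-two forms V-k}.

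The main obstacle I anticipate is purely bookkeeping: keeping the index shift $k\leftrightarrow n-k+1$ consistent across three different conventions (the mixed-volume slot count $k$, the normalized mean curvature $H_j$ on $\Sigma$, and the elementary symmetric function $H_j(A[h])$ on $\C_\theta$), together with the Jacobian factor $\det(A[h])$ that appears when passing between an integral over $\C_\theta$ and an integral over $\Sigma$. The key identity to pin down is that under $X=\tilde\nu^{-1}$ one has $H_j(A[h])\,d\s = H_{n-j}(B)\,dA$ up to the correct normalization, where $B$ is the second fundamental form of $\Sigma$; this follows from the fact that the eigenvalues of $A[h]$ are the principal radii $1/\kappa_i$ and the elementary symmetric polynomials satisfy $\sigma_j(1/\kappa_1,\dots,1/\kappa_n)=\sigma_{n-j}(\kappa)/\sigma_n(\kappa)$. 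Once this is in place, comparing with Lemma \ref{prop-two forms V-k} and the original definition \eqref{quermassintegrals} closes the argument; no hard analysis is needed, only careful combinatorial and differential-geometric identities already available in the paper or in \cite{WWX2023}.
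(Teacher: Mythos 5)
Your plan is essentially the paper's own proof: the first equality is Proposition \ref{mixed-volume-eq}, and the second is obtained by reducing $V(h,\dots,h,\ell,\dots,\ell)$ to $\frac{1}{n+1}\int_{\C_\theta}\ell\, H_k(A[h])\,d\s$ — the paper does this in one step by putting $\ell$ in the first slot via the symmetry of $V$ (Proposition \ref{lem symmetric}), which is exactly the content of your appeal to Corollary \ref{Minkowski} — and then changing variables through the capillary Gauss map and invoking \eqref{eq_last}. Two bookkeeping slips to correct: your mixed discriminant should contain $n+1-k$ copies of $\s$ (so that $Q$ has $n$ arguments), and the identity you quote for the edge case is false as written — it should be $(n+1)|\widehat{\C_\theta}|=\int_{\C_\theta}\ell\det(A[\ell])\,d\s=\int_{\C_\theta}\ell\,d\s$, not $(n+1)|\widehat{\Sigma}|$. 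Moreover the only case your route does not cover uniformly is $k=0$, whose target is $\V_{n+1,\theta}(\widehat\Sigma)$ (not $\V_{0,\theta}$), and it is not purely definitional: one still needs the area formula to see $\int_{\C_\theta}\ell\,d\s=\int_\Sigma(1+\cos\theta\<\nu,e\>)H_n\,dA$; the genuinely definitional cases are $k=n+1$ and, after one application of Corollary \ref{Minkowski} and the divergence theorem, $k=n$.
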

	\begin{proof}
 If $k=n+1$, \eqref{V and V-k} follows directly from Proposition \ref{prop of mixed vol} (1). We only need to show the case  $0\leq k\leq n$.
		From Proposition \ref{equ pro}, we know that $h$ is a capillary convex function, then 		
  \begin {eqnarray*} \label{eq2.2}
		V(\underbrace{h, \cdots, h}_{k \text{ copies}} ,\ell,\cdots, \ell) &=&\frac 1 
		{n+1}  \int _{\C_{\theta} } \ell Q (\underbrace{A[h], \cdots , A[h]}_{k \text{ copies}}, A[\ell], \cdots , A[\ell]) d\s \\&=& \frac 1 
		{n+1}  \int _{\C_{\theta} } \ell  H_k(A[h]) d\s \\
		&=&\frac{1}{n+1}\int _{\S}\left(1+\cos\theta\<\nu, e\>\right)H_{n-k}\,dA.\end{eqnarray*}
  The last equality follows from the area formula by using the diffeomorphism $\tilde{\nu}: \S\to \C_\theta$.
Together with \eqref{eq_last}, we complete the proof of \eqref{V and V-k}. 
\end{proof}

As a direct consequence of the above reinterpretation, we have the following Steiner-type formula.
\begin{prop} Let $\wh{\S}\in \K_\theta$. Then for any $t>0$, there hold
\begin{eqnarray*}  \label{Steiner2} 
  &&|\widehat {\S}+t\widehat{\C_\theta}| = \sum_{k=0}^{n+1} \binom{n+1}{k}  t^k \V_{k,\theta}(\widehat\S).
  \end{eqnarray*}
  \end{prop}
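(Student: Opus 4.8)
The plan is to expand the volume $|\widehat{\Sigma} + t\widehat{\C_\theta}|$ directly using the mixed-volume formalism built up in Section \ref{sec2.2}. First I would observe that by Proposition \ref{mink-sum} the Minkowski sum $\widehat{\Sigma} + t\widehat{\C_\theta}$ lies in $\mathcal{K}_\theta$ and its capillary support function is $h + t\ell$, where $h$ is the capillary support function of $\widehat{\Sigma}$ and $\ell$ is that of $\widehat{\C_\theta}$. Then I would apply Proposition \ref{prop of mixed vol}(1) together with the multilinearity and symmetry from Proposition \ref{prop of mixed vol}(2), writing
\begin{eqnarray*}
|\widehat{\Sigma} + t\widehat{\C_\theta}| &=& V(h+t\ell, \ldots, h+t\ell) \\
&=& \sum_{k=0}^{n+1} \binom{n+1}{k} t^{\,n+1-k}\, V(\underbrace{h,\ldots,h}_{k\ \rm{copies}}, \underbrace{\ell,\ldots,\ell}_{n+1-k\ \rm{copies}}).
\end{eqnarray*}

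Next I would identify each coefficient using Proposition \ref{prop V and V-k}: the term with $k$ copies of $h$ equals $\mathcal{V}_{n-k+1,\theta}(\widehat{\Sigma})$. Re-indexing the sum by setting $j = n+1-k$ (so $k = n+1-j$ and $n-k+1 = j$) converts the expansion into $\sum_{j=0}^{n+1} \binom{n+1}{j} t^{\,j}\, \mathcal{V}_{j,\theta}(\widehat{\Sigma})$, which is precisely the claimed Steiner-type formula (using $\binom{n+1}{k} = \binom{n+1}{n+1-k}$). This finishes the proof.

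There is essentially no serious obstacle here: the statement is a formal consequence of the binomial expansion of a multilinear symmetric form evaluated on a sum, combined with the already-established dictionary between mixed volumes of $\widehat{\Sigma}$ and $\widehat{\C_\theta}$ and the quermassintegrals $\mathcal{V}_{k,\theta}$. The only points requiring a word of care are that the Minkowski sum indeed stays in the capillary class (so that $|\widehat{\Sigma}+t\widehat{\C_\theta}|$ can legitimately be written as the mixed volume $V$ of the sum with itself $n+1$ times), which is Proposition \ref{mink-sum}, and the correct bookkeeping of the binomial coefficient and the index reversal. Both are routine, so the proof is short.
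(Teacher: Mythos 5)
Your proof is correct and is exactly the argument the paper intends: the proposition is stated there as ``a direct consequence'' of Proposition \ref{prop V and V-k}, namely the binomial expansion of the multilinear form $V$ at $h+t\ell$ (justified by Proposition \ref{mink-sum} and Proposition \ref{prop of mixed vol}) followed by the identification of each coefficient with a quermassintegral. Your index bookkeeping, including the use of $\binom{n+1}{k}=\binom{n+1}{n+1-k}$, checks out.
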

\begin{rem}
 We make further comments on the Minkowski sum 
$\widehat {\S}+t\widehat{\C_\theta}$. One can define the following suitable parallel hypersurface of $\S$. For any $t>0$,  let
  \begin{eqnarray} \label{parallel capillary}
  \Sigma_t\coloneqq \{x+t(\nu+\cos \theta e) \, |  x\in \Sigma\}.
  \end{eqnarray}
One observes that 
\begin{eqnarray*}\label{Sigma_t}\wh{\Sigma_t}=\widehat {\S}+t\widehat{\C_\theta}.\end{eqnarray*}
This is because a point $y\in \C_\theta$ is given by $y=\nu(y)+\cos \theta e$.
From this observation, one sees the unit outward normal $\nu_t$ of $\S_t$ is equal to $\nu$ and the volume form $dA_t$ of $\S_t$ is given by 
$$dA_t=\sum\limits_{k=0}^nt^k\binom{n} {k} H_k dA.$$ Therefore, by virtue of \eqref{eq_last}, we get that
\begin{eqnarray*}
 		\V_{1,\theta}(\widehat {\S_t})&=& \frac{1}{n+1}\int_{\S_t}(1+\cos\theta \<\nu_t,e\>)  dA_{t} 
 		\\&=&\frac{1}{n+1}\sum_{k=0}^n  \int_{\S}(1+\cos\theta \<\nu,e\>)t^k\binom{n} {k} H_k dA 
 		\\&=& \sum_{k=0}^n \binom{n}{k}  t^k \V_{k+1,\theta}(\widehat\S).
 	\end{eqnarray*}
We remark that recently the parallel hypersurfaces were used in \cite{JWXZ} to prove the Heintze-Karcher inequality for capillary hypersurfaces. 
\end{rem}

\section{The Alexandrov-Fenchel inequalities for mixed volume}\label{sec3}

With preparation in the previous section, we prove Theorem \ref{thm1} in this section, adapting the idea of Shenfeld-van Handel in \cite{Shenfeld, SV22}.  For the reader’s convenience, we include \cite[Lemma 1.4]{Shenfeld} below, as it will be crucially used in the subsequent analysis.
\begin{lem}[\cite{Shenfeld}]\label{lemma3.1} Let $A$ be a symmetric matrix. Then the following conditions are equivalent:
\begin{enumerate}
    \item $\<x,Ay\>^2\geq \<x,Ax\>\<y, Ay\>$ for all $x,y$ such that $\<y, Ay\>\geq 0$.

    \item The positive eigenspace of $A$ has dimension at most one.
    \end{enumerate}
 The conclusion remains valid if $A$ is a self-adjoint operator on a Hilbert space with a discrete spectrum, provided the vectors $x,y$ are chosen in the domain of $A$.
\end{lem}
We now proceed to prove the following theorem.
\begin{thm}\label{thm1-old}
 Let $f_{1}, \cdots, f_{n}\in C^2(\C_{\theta})$ be capillary convex functions on $\C_{\theta}$ and  at least one of $f_{k}~(2\leq k\leq n)$ is positive. Then for any capillary function $f\in C^2(\C_\theta)$, there holds
	\begin{eqnarray}\label{function AF}
		V^{2}(f, f_{1}, f_{2},\cdots, f_{n})\geq V(f, f, f_{2},\cdots, f_{n}) V(f_{1}, f_{1}, f_{2},\cdots, f_{n}).
	\end{eqnarray}
	Equality holds if and only if $f=af_{1}+\sum\limits_{i=1}^{n}a_{i}\<\cdot, E_{i}\>$ for some constants $a, a_{i} \in \RR$ and $\{E_i\}_{i=1}^n$ being the horizontal coordinate unit vectors of $\ol{\RR^{n+1}_+}$.
 \end{thm}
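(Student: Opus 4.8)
The plan is to follow the Shenfeld--van Handel strategy, translating the bilinear-form inequality \eqref{function AF} into a spectral statement for a self-adjoint elliptic operator on $\C_\theta$ with a Robin-type boundary condition. First I would fix the convex capillary functions $f_2,\dots,f_n$ (at least one positive) and consider the symmetric bilinear form $\mathcal{B}(f,g):=V(f,g,f_2,\dots,f_n)$ on an appropriate function space on $\C_\theta$. Writing $Q^{ij}$ for the cofactor-type tensor $\tfrac{\partial}{\partial A_{ij}}Q(A[f_2],\dots,A[f_n])$ (which is divergence-free by \eqref{div free} and positive-definite by the positivity assumption), one integrates by parts twice, as in Proposition \ref{lem symmetric}, to write $(n+1)\mathcal{B}(f,g)=\int_{\C_\theta} g\,\mathcal{A}f\,d\s$, where $\mathcal{A}f:=\sum_{i,j}Q^{ij}(\n^2 f+f\s)_{ij}$ is a second-order elliptic operator in divergence form. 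The Robin boundary condition \eqref{robin} together with Lemma \ref{pro-f} (the vanishing of $\n^2 f(e_\a,\mu)$ on $\p\C_\theta$) makes all boundary terms cancel, so $\mathcal{A}$ is symmetric with respect to the $L^2(\C_\theta,d\s)$ inner product on the domain cut out by \eqref{robin}; this is exactly the point where the constancy of the contact angle $\theta$ is essential. Standard elliptic theory (e.g. after a change of gauge $f\mapsto f/\ell$ to turn \eqref{robin} into a Neumann condition, cf.\ the remark after \eqref{support}) then gives a discrete spectrum and a complete orthonormal eigenbasis.

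Next I would identify the positive eigenspace of $\mathcal{A}$. The key algebraic identity is that $f_1$ itself is essentially a positive eigenfunction: since $\mathcal{A}f_1=\sum Q^{ij}A[f_1]_{ij}=nH_{n-1}$-type expression, one computes $\mathcal{B}(f_1,f_1)=V(f_1,f_1,f_2,\dots,f_n)>0$ while the quadratic-form inequality we want is precisely the statement that $\mathcal{B}$ has Lorentzian signature $(+,-,-,\dots)$ on the quotient by its kernel, i.e.\ the positive eigenspace of the Rayleigh quotient $\mathcal{B}(f,f)/\langle f,\mathcal{A}f\rangle$-type problem is one-dimensional. Concretely, \eqref{function AF} is equivalent to: for all capillary $f$ with $\mathcal{B}(f,f_1)=0$ one has $\mathcal{B}(f,f)\le 0$, with equality only in the kernel. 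I would prove the one-dimensionality of the top eigenspace by a Bochner/Reilly-type argument: for an eigenfunction the associated symmetric tensor satisfies a Codazzi-type identity, and integrating the Bochner formula for $Q^{ij}$-weighted Laplacians over $\C_\theta$ — carefully handling the boundary integral over $\p\C_\theta$ using that $\p\C_\theta$ is totally umbilical with principal curvature $\cot\theta$ and using Lemma \ref{pro-f} again — forces the corresponding tensor to be of the form $\n^2\varphi+\varphi\s=0$, whose solutions are exactly the restrictions of linear functions $\langle\cdot,E_i\rangle$ on $\C_\theta$ plus multiples of $\ell$; intersecting with the boundary condition pins down the equality space to $\{af_1+\sum a_i\langle\cdot,E_i\rangle\}$. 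Here one uses that $A[\langle\cdot,E_i\rangle]=0$ so these directions lie in the kernel of $\mathcal{B}$, and that $A[\ell]=\s>0$ distinguishes the $\ell$-direction as the positive one.

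The main obstacle I expect is twofold. First, the functional-analytic setup on the manifold-with-boundary $\C_\theta$: one must check that $\mathcal{A}$ with the domain given by \eqref{robin} is genuinely self-adjoint (not merely symmetric), has compact resolvent, and that elliptic regularity up to the corner-free boundary $\p\C_\theta$ applies — the gauge change to $u=h/\ell$ converting \eqref{robin} to a Neumann problem, as flagged in the Remark after the proof of Proposition \ref{equ pro}, is the clean way to do this but the conjugated operator must be shown to retain the needed structure. Second, and harder, is the rigidity analysis of the top eigenspace: one needs an a priori argument (perturbation in the style of Shenfeld--van Handel, or a continuity/degree argument deforming $(f_2,\dots,f_n)$ to $(\ell,\dots,\ell)$ where $\mathcal{A}$ becomes $\De_{\C_\theta}+n$ and the spectral picture is explicit) to conclude that the positive eigenspace stays one-dimensional for \emph{all} admissible $f_2,\dots,f_n$, not just near the round case; the boundary terms must be controlled uniformly along such a deformation. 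Once the spectral gap and the exact description of the kernel/top eigenspace are in hand, \eqref{function AF} and its equality characterization follow by the standard Cauchy--Schwarz argument in the Lorentzian bilinear form, and Theorem \ref{thm1} is the special case $f=h_1$, $f_1=h_2$, $f_k=h_{k+1}$ translated back via Proposition \ref{mixed-volume-eq}.
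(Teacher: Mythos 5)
Your overall architecture is the right one and matches the paper's: reduce \eqref{function AF} to the statement that a self-adjoint elliptic operator with the Robin condition \eqref{robin} has one-dimensional positive eigenspace, with self-adjointness coming from the symmetry of $V$ (Proposition \ref{lem symmetric}, where Lemma \ref{pro-f} and the constancy of $\theta$ kill the boundary terms), and with the kernel identified with horizontal linear functions via $A[\<\cdot,E_i\>]=0$ and the boundary condition. However, the central step --- proving that the positive eigenspace is one-dimensional for \emph{all} admissible $f_2,\dots,f_n$ --- is exactly the point you leave open, and the two substitutes you propose (a Bochner/Reilly computation with boundary terms on $\C_\theta$, or a deformation of $(f_2,\dots,f_n)$ to $(\ell,\dots,\ell)$) are neither carried out nor needed. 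The paper's mechanism is much more elementary: it is the \emph{pointwise} Alexandrov inequality for mixed discriminants,
\begin{equation*}
\bigl(Q(A[g],A[f_2],\dots,A[f_n])\bigr)^2 \ \ge\ Q(A[f_2],A[f_2],\dots,A[f_n])\, Q(A[g],A[g],A[f_2],\dots,A[f_n]),
\end{equation*}
which, after integrating against the weighted measure $d\omega=\tfrac{1}{n+1}f_2^{-1}Q(A[f_2],A[f_2],\dots,A[f_n])\,d\s$ and using the symmetry of $V$ once more, yields $\<\mathcal{A}g,\mathcal{A}g\>_{L^2(\omega)}\ge\<g,\mathcal{A}g\>_{L^2(\omega)}$ for every $g$ in the domain. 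Hence every eigenvalue satisfies $\lambda^2\ge\lambda$, i.e.\ $\lambda\ge 1$ or $\lambda\le 0$; since $f_2>0$ is an eigenfunction with eigenvalue $1$ and the first eigenvalue is simple with signed eigenfunction, the positive eigenspace is exactly $\RR f_2$. Without this (or a completed alternative), your argument does not close: the deformation route in particular would require ruling out eigenvalues crossing $0$ along the path, which is not addressed.

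Two further points. First, your normalization is off in a way that matters: with $\mathcal{A}f=\sum_{i,j}Q^{ij}A[f]_{ij}$ acting on $L^2(d\s)$, the function $f_2$ is \emph{not} an eigenfunction (one gets a generalized eigenvalue problem), and your ``Rayleigh quotient $\mathcal{B}(f,f)/\<f,\mathcal{A}f\>$'' is circular since these two quantities are proportional; the weighted measure $\omega$ and the normalized operator $\mathcal{A}f= f_2\,Q(A[f],A[f_2],\dots,A[f_n])/Q(A[f_2],A[f_2],\dots,A[f_n])$ are what make $f_2$ an exact eigenfunction with eigenvalue $1$. Relatedly, the distinguished positive direction is $f_2$ (the positive function among the fixed data), not $\ell$. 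Second, your rigidity discussion states the desired conclusion $A[\wt f]=0$ but not the mechanism producing it: in the paper it is the equality case of the same mixed discriminant inequality, which forces $A[f-af_1]=c\,A[f_2]$ pointwise, after which $\mathcal{A}(f-af_1)=0$ gives $c=0$; only then does the capillary boundary condition reduce the linear function to $\sum_i a_i\<\cdot,E_i\>$.
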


\begin{proof}
 Without loss of generality, we assume that $f_{2}$ is positive on $\C_{\theta}$.
Define $\omega$ to be the following area measure on $\C_\theta$
	$$d\omega\coloneqq\frac{1}{n+1}\frac{Q(A[f_{2}], A[f_{2}],\cdots,A[f_{n}])}{f_{2}}d\s.$$
Let $L^{2}(\C_{\theta}, \omega)$ be the Hilbert space associated with the inner product $\< , \>_{L^{2}(\omega)}$  given by 
	\begin{eqnarray*}
		\<f, g\>_{L^{2}(\omega)}\coloneqq\int_{\C_{\theta}}f\cdot g d\omega, \quad \forall f, g\in L^{2}(\C_{\theta}, \omega). 
	\end{eqnarray*}
$W^{2,2}(\C_{\theta}, \omega)$ is similarly defined. 
 Define the following operator as in \cite{Shenfeld}:
 \begin{eqnarray*}\label{operator}
 &&\mathcal{A}: 
	\text{dom}(\mathcal{A})\subset L^{2}(\C_{\theta}, \omega)\rightarrow L^{2}(\C_\theta, \omega),\\
	&&
		\mathcal{A}(f)=\frac{f_{2}Q(A[f], A[f_{2}], \cdots, A[f_{n}])}{Q(A[f_{2}], A[f_{2}],\cdots, A[f_{n}])},
	\end{eqnarray*}
	where 
	$\text{dom}(\mathcal{A})$ is given by 
	\begin{eqnarray*}
		\text{dom}(\mathcal{A})\coloneqq \left\{f\in W^{2,2}(\C_{\theta}, \omega): \n_{\mu}f=\cot\theta f~ \text{on}~\partial\C_{\theta}\right\}.
	\end{eqnarray*}
	Note that all the above objects are well-defined, due to $f_{2}>0$ and   $f_{1}, \cdots, f_{n}$ are capillary convex functions on $\C_{\theta}$. 
	
	By the very definition of $\mathcal{A}$,  \eqref{function AF} is equivalent to	\begin{eqnarray}\label{key-ine}
		\left(\<f, \mathcal{A} f_{1}\>_{L^{2}(\omega)}\right)^{2}\geq \<f, \mathcal{A}f\>_{L^{2}(\omega)}\<f_{1}, \mathcal{A}f_{1}\>_{L^{2}(\omega)},
	\end{eqnarray}
	for functions $f, \{f_{i}\}_{i=1}^{n}$ that satisfy the assumptions in Theorem \ref{thm1}. By Lemma \ref{lemma3.1}, we only need to prove that the positive eigenspace of $\mathcal{A}$ has dimension at most one, which we are going to do. 
	
	Since $f_{1}, \cdots, f_{n}$ are capillary convex functions, we know that $\mathcal{A}$ is a uniformly elliptic operator with a Robin boundary condition.  Proposition \ref{lem symmetric} implies 
	\begin{eqnarray*}
		\<f, \mathcal{A}g\>_{L^{2}(\omega)}=\<g, \mathcal{A}f\>_{L^{2}(\omega)}, \quad \forall f, g\in \text{dom}(\mathcal{A}),
	\end{eqnarray*}
	which means that $\mathcal{A}$ is a self-adjoint operator on $\text{dom}(\mathcal{A})$.  Consider the following eigenvalue problem
\begin{eqnarray}\label{eigenvalue problem}
		\begin{array}{rcll}
			\mathcal{A}f&=&  \lambda f ,& \quad    \hbox{ in } \C_{\theta},\\
			\n_\mu f&=&\cot\theta f, &\quad  \hbox{ on } \p \C_\theta.
		\end{array}
	\end{eqnarray}
The classical spectral theory of compact, self-adjoint operators implies that the eigenvalue problem \eqref{eigenvalue problem} admits a sequence of real eigenvalues $\l_1\geq \l_2\geq \cdots$,  with $\lambda_k\rightarrow -\infty$, its first eigenvalue $\lambda_{1}$ is simple and its first eigenfunction has the same sign everywhere. 
  It is trivial to check that $f_2$ satisfies \eqref{eigenvalue problem} with $\lambda=1$. Since  $f_2>0$,  $f_2$ is the first eigenfunction with the first eigenvalue $\lambda_1=1$.

	On the other hand, by using Alexandrov's mixed discriminant inequality (see e.g. \cite[Theorem 5.5.4]{Sch})
\begin{eqnarray}\label{alex-ineq}
     \left(Q(A[{g}], A[f_{2}], \cdots, A[f_{n}])\right)^{2}\ge Q(A[f_{2}], A[f_{2}],\cdots, A[f_{n}])\cdot Q(A[{g}], A[{g}], A[f_{2}], \cdots, A[f_{n}]),~~
 \end{eqnarray}
 together with Proposition \ref{lem symmetric}, for all function ${g}\in \text{dom}({\mathcal{A}})$ we have  
\begin{eqnarray}\label{eigen-est}
		\<\mathcal{A}{g}, \mathcal{A}{g}\>_{L^{2}(\omega)}&=&\frac{1}{n+1}\int_{\C_{\theta}}\frac{f_{2}\left(Q(A[{g}], A[f_{2}], \cdots, A[f_{n}])\right)^{2}}{Q(A[f_{2}], A[f_{2}],\cdots, A[f_{n}])}d\s\nonumber\\
		&\geq&\frac{1}{n+1}\int_{\C_{\theta}}f_{2}Q(A[{g}], A[{g}], A[f_{2}], \cdots, A[f_{n}])d\s\nonumber\\
		&=&\frac{1}{n+1}\int_{\C_{\theta}}{g} Q(A[{g}], A[f_{2}], \cdots, A[f_{n}])d\s \nonumber\\&=& \<{g}, \mathcal{A}{g}\>_{L^{2}(\omega)}.
	\end{eqnarray}
	Therefore, if $\lambda$ is an eigenvalue of the operator $\mathcal{A}$, the above inequality implies that $\lambda^{2}\geq \lambda$, which means $\lambda\geq 1$ or $\lambda\leq 0$. 
	Consequently,  the positive eigenspace of $\mathcal{A}$ is of one dimension and is spanned by $f_{2}$. In view of Lemma \ref{lemma3.1}, we complete the proof of \eqref{key-ine}. 

Next, we characterize equality. If  equality holds in \eqref{function AF}, then also equality holds in \eqref{key-ine},  that is
	\begin{eqnarray}\label{equality AF1}
	\left(\<f, \mathcal{A} f_{1}\>_{L^{2}(\omega)}\right)^{2}= \<f, \mathcal{A}f\>_{L^{2}(\omega)}\<f_{1}, \mathcal{A}f_{1}\>_{L^{2}(\omega)}.	\end{eqnarray}
From \cite[Lemma~2.9 and its proof]{Shenfeld},   equality holds in \eqref{equality AF1} if and only if $\widetilde{f}\coloneqq f-af_{1}\in \text{Ker}(\mathcal{A})$ for some constant $a\in \RR$.

For $g=\tilde{f}$, equality holds in \eqref{eigen-est} and in turn in Alexandrov's mixed discriminant inequality \eqref{alex-ineq}. It follows from \cite[Theorem 5.5.4]{Sch} that
$A[\tilde{f}]=c A[f_2]$ for some function $c: \C_\theta\to \RR$.
Since $\tilde{f}\in \text{Ker}(\mathcal{A})$, we have
\begin{eqnarray*}
   0=\frac{f_{2}Q(A[\tilde{f}-cf_2], A[f_{2}], \cdots, A[f_{n}])}{Q(A[f_{2}], A[f_{2}],\cdots, A[f_{n}])}=\mathcal{A}\tilde{f}-c=-c. 
\end{eqnarray*}
This implies $A[\tilde{f}]=0$ and 
hence $\tilde f$ is a linear function. The fact  that  $\tilde{f}$ is a capillary function implies  that 
	$\tilde{f}=\sum\limits_{i=1}^{n}a_{i}\<\cdot, E_{i}\>$ for some constants $a_i \in\RR$, $i=1,\cdots, n$. Therefore, equality in \eqref{function AF} holds if and only if $f=af_{1}+\sum\limits_{i=1}^{n}a_{i}\<\cdot, E_{i}\>.$
\end{proof}

\begin{proof}[\textbf{Proof of Theorem \ref{thm1}}]

Taking $f$ and $f_i$  $(i=1,\cdots, n)$ to be the capillary support functions of $\widehat{\S}_1$ and  $\widehat{\S}_{i+1}(i=1,\cdots, n)$, in Theorem \ref{thm1}. By a horizontal translation, we may assume $f_i>0$. Applying Theorem \ref{thm1-old} and taking into account of Proposition \ref{mixed-volume-eq}, we obtain the desired result.
\end{proof}

\medskip

By Theorem \ref{thm1}, following the same argument as in \cite[Section~7.4 and (7.63)]{Sch}, we obtain the following inequality for capillary convex bodies.
\begin{cor}\label{coro-AF}

	 Let $m\in \{1, \cdots, n+1\}$ and  $K_{0}, K_{1},  K_{m+1},\cdots, K_{n+1}\in\mathcal{K}_{\theta}$, and $h_j$ be the capillary support function of $K_j$.  Let
	\begin{eqnarray}\label{Wi}
V_{(i),\theta}\coloneqq V(\underbrace{h_{0},\cdots, h_{0}}_{(m-i)~\rm{copies}}, \underbrace{h_{1},\cdots, h_{1}}_{i~\rm{copies}}, h_{m+1},\cdots, h_{n+1}),\quad \text{for}\quad i=0,\cdots, m.\end{eqnarray}
Then $V_{(i),\theta}$ satisfies 
	\begin{eqnarray}\label{W-AF}
		V_{(j), \theta}^{k-i}\geq V_{(i), \theta}^{k-j}V_{(k), \theta }^{j-i}, ~~ ~~ 0\leq i<j<k\leq m\leq n+1.
	\end{eqnarray}
 Equality holds if and only if $h_{0}=ah_{1}+\sum\limits_{i=1}^{n}a_{i}\<\xi, E_{i}\>$ for some constants $a$ and $a_{i}$, $1\leq i\leq n$.	
\end{cor}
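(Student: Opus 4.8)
The plan is to deduce Corollary \ref{coro-AF} from Theorem \ref{thm1} by a purely formal argument, following verbatim the reduction used by Schneider for the classical Alexandrov--Fenchel inequalities (cf. \cite[Section~7.4]{Sch}). The key observation is that by the multilinearity and symmetry of $V$ (Proposition \ref{prop of mixed vol}(2)), once we fix the convex bodies $K_{m+1},\dots,K_{n+1}$ and write $Z:=(h_{m+1},\dots,h_{n+1})$ for the corresponding tuple of support functions, the quantities $V_{(i),\theta}$ depend only on the pair $(h_0,h_1)$. Applying Theorem \ref{thm1} with the first two slots filled by various combinations of $h_0$ and $h_1$ (and the remaining $n-1$ slots filled by an appropriate combination of $h_0$'s, $h_1$'s, and the $h_j$'s with $j\ge m+1$, all of which are capillary convex functions), we get that $V_{(i),\theta}$ is \emph{log-concave} in $i$ along the chain $i=0,\dots,m$: precisely, for every $1\le i\le m-1$,
\begin{eqnarray}\label{log-conc}
V_{(i),\theta}^2 \ge V_{(i-1),\theta}\, V_{(i+1),\theta}.
\end{eqnarray}
Indeed, to obtain \eqref{log-conc} one applies \eqref{general AF} (equivalently \eqref{function AF}) with $\widehat\S_1,\widehat\S_2$ replaced by $K_0,K_1$ and with the common bodies $\widehat\S_3,\dots,\widehat\S_{n+1}$ chosen to be $\underbrace{K_0,\dots,K_0}_{m-i-1},\underbrace{K_1,\dots,K_1}_{i-1},K_{m+1},\dots,K_{n+1}$; the hypothesis of Theorem \ref{thm1-old} that at least one of the common functions be positive is arranged by a horizontal translation, using Proposition \ref{prop of mixed vol}(3).

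Next I would run the standard elementary argument that a log-concave positive sequence satisfies the full Newton/Maclaurin-type chain \eqref{W-AF}. From \eqref{log-conc} one first checks all $V_{(i),\theta}>0$ (the endpoints are volumes or mixed volumes of genuine convex bodies, hence positive by Proposition \ref{prop of mixed vol}(4) and non-degeneracy, and log-concavity propagates positivity along the chain), and then that $i\mapsto \log V_{(i),\theta}$ is concave on $\{0,1,\dots,m\}$. Concavity of $i\mapsto\log V_{(i),\theta}$ is exactly the statement that for $0\le i<j<k\le m$ the point $\big(j,\log V_{(j),\theta}\big)$ lies on or above the chord joining $\big(i,\log V_{(i),\theta}\big)$ and $\big(k,\log V_{(k),\theta}\big)$, i.e.
\begin{eqnarray*}
(k-i)\log V_{(j),\theta}\ \ge\ (k-j)\log V_{(i),\theta}+(j-i)\log V_{(k),\theta},
\end{eqnarray*}
which exponentiates to \eqref{W-AF}. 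This is a routine convexity lemma and needs no geometry.

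For the equality case I would again follow \cite[Section~7.4]{Sch}: equality in \eqref{W-AF} for some triple $i<j<k$ forces, by the convexity lemma, equality in every intermediate two-term inequality \eqref{log-conc} for the indices in between, and in particular in at least one genuine instance of \eqref{general AF} with $\widehat\S_1=K_0$, $\widehat\S_2=K_1$. The rigidity part of Theorem \ref{thm1} then gives $h_0=a h_1+\sum_{i=1}^n a_i\langle\cdot,E_i\rangle$ for constants $a,a_i\in\RR$; conversely, if $h_0$ has this form, then since $V$ is invariant under horizontal translations (Proposition \ref{prop of mixed vol}(3)) and homogeneous of degree one in each slot, every $V_{(i),\theta}$ equals $a^{\,i}$ times a common constant, and \eqref{W-AF} holds with equality.

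The only mildly delicate point — and the place I expect the real (if modest) work to be — is arranging the positivity hypothesis needed to invoke Theorem \ref{thm1-old}: the theorem requires one of the $n-1$ ``common'' capillary convex functions to be strictly positive, and along the chain one of these common slots is filled by $h_0$ or $h_1$ (when $m\ge 2$), which after a horizontal translation can be made positive; the translation changes none of the mixed volumes by Proposition \ref{prop of mixed vol}(3), so this causes no loss of generality. Apart from that bookkeeping, the corollary is a formal consequence of Theorem \ref{thm1} together with the basic properties collected in Proposition \ref{prop of mixed vol}, exactly as in the classical theory.
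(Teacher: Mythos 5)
Your proposal is correct and follows exactly the route the paper takes: the paper's proof of this corollary is a one-line reduction to Theorem \ref{thm1} via the standard log-concavity/chord argument of \cite[Section~7.4 and (7.63)]{Sch}, which is precisely what you spell out (including the equality analysis via the rigidity in Theorem \ref{thm1}). The only cosmetic remark is that in the converse of the equality case one gets $V_{(i),\theta}=a^{\,m-i}C$ rather than $a^{\,i}C$, which of course still yields equality in \eqref{W-AF}.
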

As a consequence, we get Theorem \ref{thm2}.
\begin{proof}[\textbf{Proof of Theorem \ref{thm2}}]

By choosing $k=m=n+1$, $K_0=\wh{\S}$ and $K_{1}=\widehat{\C_{\theta}}$ in  Corollary \ref{coro-AF}, together with the fact that
$V_{(n+1),\theta}=|\widehat{\C_{\theta}}|=\bt$ and Proposition \ref{prop V and V-k}, we obtain the desired result. 
\end{proof}

\

\noindent\textit{Acknowledgment}: We would like to thank the referee for careful reading to improve the context of the paper. X.M. was partially supported by the National Key R$\&$D Program of China 2020YFA0712800  and the Postdoctoral Fellowship Program of CPSF under Grant Number GZC20240052. L.W. was partially supported by CRM De Giorgi of Scuola Normale Superiore and PRIN Project 2022E9CF89 of University of Pisa. C.X. was partially supported by the NSFC (Grant No. 12271449, 12126102) and the Natural Science Foundation of Fujian Province of China (Grant No. 2024J011008).

\ 

\printbibliography 
\end{document}